\newtheorem{thm}{Theorem}[section]
\newtheorem{cor}[thm]{Corollary}
\newtheorem{defi}[thm]{Definition}
\newtheorem{lem}[thm]{Lemma}
\newtheorem{prop}[thm]{Proposition}
\newtheorem{prob}[thm]{Problem}
\newtheorem{obs}[thm]{Observation}
\newtheorem{ex}[thm]{Example}
\theoremstyle{remark}
\newtheorem{remark}[thm]{Remark}
\def\R{\mathbb{R}}
\def\HH{\mathcal {H}}
\def\FF{\mathcal {F}}
\def\UU{\mathcal {U}}
\def\DD{\mathcal {D}}
\def\blfootnote{\gdef\@thefnmark{}\@footnotetext}
\begin{document}

\title{An abstract approach to polychromatic coloring: shallow hitting sets in ABA-free hypergraphs and pseudohalfplanes}
\author{Bal\'azs Keszegh\thanks{Research supported by the National Research, Development and Innovation Office -- NKFIH under the grant K 116769 and by the Lend\"ulet program of the Hungarian Academy of Sciences (MTA), under grant number LP2017-19/2017.}\\
\and D\"om\"ot\"or P\'alv\"olgyi\thanks{Research supported by the Marie Sk\l odowska-Curie action of the EU, under grant IF 660400 and by the Lend\"ulet program of the Hungarian Academy of Sciences (MTA), under grant number LP2017-19/2017.}
}

\maketitle


\begin{abstract}
The goal of this paper is to give a new, abstract approach to cover-decomposition and polychromatic colorings using hypergraphs on ordered vertex sets. 
We introduce an abstract version of a framework by Smorodinsky and Yuditsky, used for polychromatic coloring halfplanes, and apply it to so-called {\em ABA-free hypergraphs}, which are a generalization of {\em interval graphs}.
Using our methods, we prove that $(2k-1)$-uniform ABA-free hypergraphs have a polychromatic $k$-coloring, a problem posed by the second author.
We also prove the same for hypergraphs defined on a point set by pseudohalfplanes.
These results are best possible.
We could only prove slightly weaker results for dual hypergraphs defined by pseudohalfplanes, and for hypergraphs defined by pseudohemispheres.
We also introduce another new notion that seems to be important for investigating polychromatic colorings and $\epsilon$-nets, {\em shallow hitting sets}.
We show that all the above hypergraphs have shallow hitting sets, if their hyperedges are containment-free.
\end{abstract}


\section{Introduction}

The study of proper and polychromatic colorings of geometric hypergraphs has attracted much attention, not only because this is a very basic and natural theoretical problem but also because such problems often have important applications.
One such application area is resource allocation, e.g., battery consumption in sensor networks.
Moreover, the coloring of geometric shapes in the plane is related to the problems of cover-decomposability, conflict-free colorings and $\epsilon$-nets; these problems have applications in sensor networks and frequency assignment as well as other areas. For surveys on these and related problems see \cite{PPT14,cfsurvey}.

In a (primal) {\em geometric hypergraph polychromatic coloring} problem, we are given a natural number $k$, a set of points and a collection of regions in $\R^d$, and our goal is to $k$-color the points such that every region that contains at least $m(k)$ points contains a point of every color, where $m$ is some function that we try to minimize.
We call such a coloring a {\em polychromatic $k$-coloring}. 
In a {\em dual} geometric hypergraph polychromatic coloring problem, our goal is to $k$-color the regions such that every point which is contained in at least $m(k)$ regions is contained in a region of every color.
In other words, in the dual version our goal is to {\em decompose} an $m(k)$-fold covering of some point set into $k$ coverings.
The primal and the dual versions are equivalent if the underlying regions are the translates of some fixed set.
For the proof of this statement and an extensive survey of results related to {\em cover-decomposition}, see e.g., \cite{PPT14}.
Below we mention some of these results, stated in the equivalent primal form.

The most general result about translates of polygons is that given a fixed convex polygon, there exists a $c$ (that depends only on the polygon) such that any {\em finite} point set has a  polychromatic $k$-coloring such that any translate of the fixed convex polygon that contains at least $m(k)=c\cdot k$ points  contains a point of every color \cite{GV10}.
Non-convex polygons for which such a finite $m(k)$ (for any $k\ge 2$) exists have been classified \cite{P10,PT10}.

As it was shown recently \cite{P14}, there is no such finite $m(2)$ for convex sets with a smooth boundary, e.g., for the translates of a disc.
However, it was also shown in the same paper that for the translates of any {\em unbounded} convex set $m(2)=3$ is sufficient.
In this paper we extend this result to every $k$, showing that $m(k)=2k-1$ is an optimal function for unbounded convex sets.
Our proof is an abstraction of a method developed by Smorodinsky and Yuditsky \cite{SY12}.

For homothets of a given shape the primal and dual problems are not equivalent.
For homothets of a triangle (a case closely related to the case of translates of octants \cite{KP12,KP14octants}), there are several results, the current best are $m(k)= O(k^{4.09})$ in the primal version \cite{CKMU13,KP15} and $m(k)= O(k^{5.09})$ in the dual version \cite{CKMU14,KP15}.
For the homothets of other convex polygons, in the dual case there is no finite $m(2)$ \cite{K14}, and in the primal case only conditional results are known \cite{KP14}, namely, that the existence of a finite $m(2)$ implies the existence of an $m(k)$ that grows at most polynomially in $k$.
In fact, it is even possible that for {\em any} polychromatic coloring problem $m(k)=O(k\cdot m(2))$.

For other shapes, cover-decomposability has been studied less, in these cases the investigation of polychromatic-colorings is motivated rather by conflict-free colorings or $\epsilon$-nets.
Most closely related to our paper, coloring halfplanes for small values were investigated in \cite{wcf,K12,F11}, and polychromatic $k$-colorings in \cite{SY12}.
We generalize all the (primal and dual) results of the latter paper to pseudohalfplanes, answering a question left open by the authors.\footnote{Personal communication, Shakhar Smorodinsky.}
Note that translates of an unbounded convex set form a set of pseudohalfplanes, thus the above mentioned result about unbounded convex sets is a special case of this generalization to pseudohalfplanes. 

Axis-parallel rectangles are usually investigated from the $\epsilon$-net point of view (e.g., \cite{CHPSZT08,PaTa10}), for which the coloring function $f$ is not independent of the number of points/regions.
Motivated by these, bottomless rectangles are regarded for small values in \cite{wcf,K12} and polychromatic $k$-colorings in \cite{A13}.
In this paper we place bottomless rectangles in our abstract context and pose some further problems about them.

Besides generalizing earlier results, our contribution is a more abstract approach to the above problems.
Namely, we introduce the notion of {\em ABA-free hypergraphs} (see Definition~\ref{def:ABA}), {\em shallow hitting sets} (see Definition~\ref{defi:shallow}) and {\em balanced polychromatic colorings} (see Definition~\ref{def:balanced}), and discuss their relevance.

In such a coloring context ABA-free hypergraphs were first defined in \cite{P14} under the name {\em special shift-chains}, as they are a special case of {\em shift-chains} introduced in \cite{PhD}. However, such families were regarded earlier, similarly motivated by their geometric interpretations. Namely, in \cite{kintersect} they consider $k$-intersecting $x$-monotone curves and $k$-intersecting families of ($0$-$1$-)vectors. With our definitions $1$-intersecting $x$-monotone curves are exactly pseudolines, while $1$-intersecting (resp.\ $2$-intersecting) families of vectors are exactly the families of characteristic vectors of ABA-free (resp.\ ABAB-free, see Definition  \ref{def:abab}) families of sets. The primary interest of \cite{kintersect} lies in determining the maximum size of $l$-uniform $k$-intersecting families of vectors on $n$ coordinates.
\subsection{Definitions and statements of main results}


\begin{defi}\label{def:ABA}
A hypergraph $\mathcal H$ with an ordered vertex set is called {\em ABA-free} if $H$ does not contain two hyperedges $A$ and $B$ for which there are three vertices $x<y<z$ such that $x,z\in A\setminus B$ and $y\in B\setminus A$.

A hypergraph with an unordered vertex set is ABA-free if its vertices have an ordering with which the hypergraph is ABA-free.\footnote{While it might seem that using the same notion for ordered and unordered hypergraphs leads to confusion as by forgetting the ordering of an ordered hypergraph it might become ABA-free, from the context it will always be perfectly clear what we mean.}
\end{defi}

\begin{ex}\label{jamegezisegypelda}
An {\em interval hypergraph} is a hypergraph whose vertices are some points of $\R$, and its hyperedges are some intervals from $\R$, with the incidences preserved.
\end{ex}

\begin{ex}[\cite{P14}]\label{egyikpelda}
Let $S$ be a set of points in the plane with different $x$-coordi\-na\-tes and let $C$ be a convex set that contains a vertical halfline.
Define a hypergraph $\mathcal H$ whose vertex set is the $x$-coordinates of the points of $S$.
A set of numbers $X$ is a hyperedge of $\mathcal H$ if there is a translate of $C$ such that the 
$x$-coordinates of the points of $S$ contained in the translate are exactly $X$.
The hypergraph $\mathcal H$ defined this way is ABA-free.
\end{ex}

\begin{ex}\label{masikpelda}
Let $S$ be a set of points in the plane in general position.
Define a hypergraph $\mathcal H$ whose vertex set is the $x$-coordinates of the points of $S$.
A set of numbers $X$ is a hyperedge of $\mathcal H$ if there is a positive halfplane $H$ (i.e., that contains a vertical positive halfline) such that the set of $x$-coordinates of the points of $S$ contained in $H$ is $X$. 
The hypergraph $\mathcal H$ defined this way is ABA-free.
\end{ex}

The above examples show how to reduce geometric problems to abstract problems about ABA-free hypergraphs.
Observe that given an $S$, by choosing an appropriately big parabola, any hyperedge defined by a positive halfplane as in Example~\ref{masikpelda} is also defined by some translate of the big parabola as in Example~\ref{egyikpelda}, thus Example~\ref{egyikpelda} is more general than Example~\ref{masikpelda}, and it is easy to see that both are more general than Example~\ref{jamegezisegypelda}.
Even more, as we will see later in Section~\ref{sec:pseudo}, finite ABA-free hypergraphs have an equivalent geometric representation with graphic pseudoline arrangements (here hyperedges are defined by the regions above the pseudolines, for the definitions and details see Section~\ref{sec:pseudo}) and both translates of the boundary of an unbounded convex set and lines in the plane form graphic pseudoline arrangements, showing again that the above examples are special cases of ABA-free hypergraphs.

To study polychromatic coloring problems, we also introduce the following definition, which is implicitly used in \cite{SY12}, but deserves to be defined explicitly as it seems to be important in the study of polychromatic colorings.

\begin{defi}\label{defi:shallow}
A set $R$ is a {\em $c$-shallow hitting set} of the hypergraph $\mathcal H$ if for every $H\in \mathcal H$ we have $1\le |R\cap H|\le c$.
\end{defi}

Actually, almost all our results are based on shallow hitting sets.

Our main results and the organization of the rest of this paper are as follows.

In Section~\ref{sec:ABA} we prove 
(following closely the ideas of Smorodinsky and Yuditsky~\cite{SY12}) that every $(2k-1)$-uniform ABA-free hypergraph has a polychromatic coloring with $k$ colors. We then observe that the dual of this problem is equivalent to the primal, which implies that the hyperedges of every $(2k-1)$-uniform ABA-free hypergraph can be colored with $k$ colors, such that if a vertex $v$ is in a subfamily $\HH_v$ of at least $m(k)=2k-1$ of the hyperedges of $\HH$, then $\HH_v$ contains a hyperedge from each of the $k$ color classes. 

In Section~\ref{sec:pseudo} we give an abstract equivalent definition (using ABA-free hypergraphs) of hypergraphs defined by pseudohalfplanes, and we prove 
that given a finite set of points $S$ and a pseudohalfplane arrangement $\HH$, we can $k$-color $S$ such that any pseudohalfplane in $\HH$ that contains at least $m(k)=2k-1$ points of $S$ contains all $k$ colors. 
Both results are sharp. Note that these results imply the same for hypergraphs defined by unbounded convex sets.

In Section~\ref{sec:signed} we discuss dual and other versions of the problem.
For example we prove 
that given a pseudohalfplane arrangement $\HH$, we can $k$-color $\HH$ such that if a point $p$ belongs to a subfamily $\HH_p$ of at least $m(k)=3k-2$ of the pseudohalfplanes of $\HH$, then $\HH_p$ contains a pseudohalfplane from each of the $k$ color classes.
This result might not be sharp, the best known lower bound for $m(k)$ is $2k-1$ \cite{SY12}.

In Section~\ref{sec:bless}, we discuss ABAB-free hypergraphs and related problems.
We also discuss consequences about  $\epsilon$-nets on pseudohalfplanes in Appendix~\ref{app:epsnet}.


\medskip
We denote the symmetric difference of two sets, $A$ and $B$, by $A\Delta B$, the complement of a hyperedge $F$ by $\bar F$ and for a family $\FF$ we use $\bar\FF=\{\bar F \mid F\in\FF\}$.
We will suppose (unless stated otherwise) that all hypergraphs and point sets are finite, and denote the smallest (resp.\ largest) element of an ordered set $H$ by $\min(H)$ (resp.\ $\max(H)$).

\section{ABA-free hypergraphs and the general coloring algorithm}\label{sec:ABA}

Suppose we are given an ABA-free hypergraph $\mathcal H$ on $n$ vertices.
As the hypergraph is ABA-free, for any pair of sets $A, B \in \mathcal H$ either there are 
$a<b$ such that $a\in A\setminus B$ and $b\in B\setminus A$, or there are 
$b<a$ such that $a\in A\setminus B$ and $b\in B\setminus A$, or none of them, but not both as that would contradict ABA-freeness.

Define $A<B$ if and only if there are $a<b$ such that $a\in A\setminus B$ and $b\in B\setminus A$, and $A\le B$ if and only if either $A=B$ (as sets) or $A<B$.
By the above, this is well-defined, and below we show that it gives a partial ordering of the sets.

\begin{obs} If $A<B$ and $a\in A\setminus B$, then there is a $b\in B\setminus A$ such that $b>a$.
\end{obs}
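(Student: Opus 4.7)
The plan is to unpack the definition of $A<B$: there exist $a_0<b_0$ with $a_0\in A\setminus B$ and $b_0\in B\setminus A$. Given an arbitrary $a\in A\setminus B$, my goal is to produce some $b\in B\setminus A$ with $b>a$, and the natural candidate is $b_0$ itself. The whole question is therefore whether $b_0>a$.

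First I would dispatch the easy cases. If $a<b_0$, then $b=b_0$ works immediately; in particular, if $a=a_0$, then $b_0>a_0=a$ and we are done. So the only remaining possibility is $a\ge b_0$. Since $a\in A\setminus B$ while $b_0\in B\setminus A$, we must have $a\ne b_0$, hence strictly $a>b_0$; similarly $a\ne a_0$, so $a_0<b_0<a$ are three distinct vertices.

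Now I would invoke ABA-freeness. These three vertices $a_0<b_0<a$ satisfy $a_0,a\in A\setminus B$ and $b_0\in B\setminus A$, which is exactly the forbidden ABA pattern on the pair $\{A,B\}$. This contradicts the assumption that $\mathcal H$ is ABA-free, so the remaining case cannot occur. Hence $b_0>a$, and $b=b_0$ furnishes the desired element of $B\setminus A$.

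The whole argument is genuinely short, with no real obstacle; the only subtlety worth flagging is the need to handle the case $a=a_0$ before appealing to ABA-freeness, since that definition requires three \emph{strictly} increasing vertices and would otherwise be vacuously inapplicable.
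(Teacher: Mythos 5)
Your argument is correct and is exactly the intended one: the paper states this as an Observation with no written proof, and the implicit justification is precisely your case split on whether $a<b_0$, with ABA-freeness ruling out $a_0<b_0<a$. Nothing is missing; the care you take to exclude $a=a_0$ and $a=b_0$ before invoking the three-distinct-vertices condition is the only point of any subtlety, and you handle it properly.
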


\begin{prop}\label{prop2.2} If $A<B$ and $B<C$, then $A<C$.
\end{prop}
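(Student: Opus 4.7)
The plan is to fix witnesses $a\in A\setminus B$ and $b\in B\setminus A$ with $a<b$ (which exist because $A<B$) and do a short case analysis on whether $a$ and $b$ lie in $C$. In each case I aim either to produce a pair witnessing $A<C$ directly, or to derive an ABA or BAB configuration on the pair $\{A,B\}$, contradicting ABA-freeness.

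Along the way I will also need the mirror of the observation stated just above the proposition: \emph{if $A<B$ and $b\in B\setminus A$, then there exists $a<b$ with $a\in A\setminus B$}. Its proof is the same as the one for the original observation, with a BAB pattern replacing the ABA one.

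The trivial case is $a\notin C$, $b\in C$, in which $a\in A\setminus C$ and $b\in C\setminus A$ already witness $A<C$. In every other case either $a\in C\setminus B$ (when $a\in C$, using $a\notin B$) or $b\in B\setminus C$ (when $b\notin C$, using $b\in B$). Applying the observation, or its mirror, to $B<C$ then produces some $b'<a$ in $B\setminus C$ and/or some $c'>b$ in $C\setminus B$. Testing whether each new element lies in $A$ splits the case in two. The ``bad'' outcome places either $b',b\in B\setminus A$ around $a\in A\setminus B$ (a BAB pattern) or $a,c'\in A\setminus B$ around $b\in B\setminus A$ (an ABA pattern), violating ABA-freeness of the pair $\{A,B\}$. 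The ``good'' outcome gives $b'\in A\setminus C$ or $c'\in C\setminus A$, which combines with the elements already in hand to witness $A<C$.

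I expect the main obstacle to be purely combinatorial bookkeeping, particularly in the subcase $a\in C$, $b\notin C$: there one must extract both $b'$ and $c'$, observe that each ``bad'' choice independently contradicts ABA-freeness of $\{A,B\}$, and conclude that $b'\in A\setminus C$ and $c'\in C\setminus A$, so that $b'<a<b<c'$ yields $b'<c'$ and hence $A<C$.
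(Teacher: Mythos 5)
Your proof is correct and follows essentially the same route as the paper's: both fix witnesses for $A<B$, case on their membership in $C$, use the Observation (and its BAB mirror, which the paper uses implicitly) to extract elements of $B\setminus C$ and $C\setminus B$ on the correct sides, and pin down their membership in $A$ via forbidden ABA/BAB patterns on $\{A,B\}$. The only cosmetic difference is that you split into four cases by the memberships of both witnesses at once and derive every contradiction from the pair $\{A,B\}$, whereas the paper branches on $a$ first and at one step invokes a forbidden pattern for the pair $\{B,C\}$; the content is the same.
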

\begin{proof} Take an $a\in A\setminus B$.
If $a\notin C$, then take a $b\in B\setminus A$.
If $b\in C$, then $A<C$ and we are done.
Otherwise, there has to be a $c>b$ such that $c\in C\setminus B$.
If $c\in A$, then $a<b<c$ forms a forbidden sequence for $A$ and $B$, thus $c\notin A$.
Then by definition $a$ and $c$ show that $A<C$. 

If $a\in C$, then also $a\in C\setminus B$, thus there has to be a $b_1<a$ such that $b_1\in B\setminus C$.
As $a\in A\setminus B$ and $A<B$, we also have $b_1\in A$ and so $b_1\in A\setminus C$.
There also has to be a $b_2>a$ such that $b_2\in B\setminus A$.
If $b_2\notin C$, then $b_1<a<b_2$ forms a forbidden sequence for $B$ and $C$.
Thus $b_2\in C\setminus A$, and by definition $b_1$ and $b_2$ show that $A<C$.
\end{proof}

We proceed with another definition.

\begin{defi}
A vertex $a$ is {\em skippable} if there exists an $A\in \HH$ such that $\min(A)< a < \max(A)$ and $a\notin A$.
In this case we say that $A$ {\em skips} $a$. 
A vertex $a$ is {\em unskippable} if there is no such $A$.
\end{defi}

\begin{obs}
If a vertex $a$ is unskippable in some ABA-free hypergraph $\HH$, then after adding the one-element hyperedge $\{a\}$ to $\HH$, it remains ABA-free.
\end{obs}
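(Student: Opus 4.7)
The plan is to argue by contradiction: suppose that adding $\{a\}$ to $\HH$ creates a forbidden ABA pattern, and show that the forbidden pattern forces $a$ to be skippable. Any newly created bad configuration must involve $\{a\}$ itself, since $\HH$ was already ABA-free, so I just need to analyze the two possible roles the singleton $\{a\}$ could play in the forbidden triple $x<y<z$ with $x,z\in A\setminus B$ and $y\in B\setminus A$.

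First I would rule out $\{a\}$ playing the role of the outer set $A$. The condition $x,z\in A\setminus B$ with $x<z$ demands at least two distinct elements in $A\setminus B$, but $|\{a\}|=1$, so $\{a\}\setminus B$ has at most one element. Hence this role is impossible purely for cardinality reasons.

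Next I would handle the only remaining case, where $\{a\}$ plays the role of $B$. Then $y\in\{a\}\setminus A$ forces $y=a$ and $a\notin A$, while $x,z\in A\setminus\{a\}$ with $x<a<z$ gives $\min(A)\le x<a<z\le\max(A)$. This precisely says that $A$ skips $a$ in the sense of the preceding definition, contradicting the assumption that $a$ is unskippable. Once this case is eliminated, the hypergraph $\HH\cup\{\{a\}\}$ contains no forbidden pattern, so it is ABA-free.

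There is no real obstacle here; the statement is essentially a direct consequence of the definitions, and the only thing to be careful about is exhausting the two ways the singleton can appear in a forbidden triple. The proof should fit in a few lines.
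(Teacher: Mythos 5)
Your proof is correct and complete: the paper states this as an observation without proof, and your two-case check (the singleton cannot serve as the outer set $A$ for cardinality reasons, and serving as $B$ would force $A$ to skip $a$) is exactly the direct verification from the definitions that the authors implicitly rely on.
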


Note that the following two lemmas show that the unskippable vertices of an ABA-free hypergraph behave with respect to hyperedges similarly to how the vertices on the convex hull of a point set behave with respect to halfplanes.
These two lemmas make it possible to use the framework of \cite{SY12} on ABA-free hypergraphs.

\begin{lem}\label{lem:unskippable}
If $\mathcal H$ is ABA-free, then every $A\in \mathcal H$ contains an unskippable vertex.
\end{lem}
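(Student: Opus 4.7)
My plan is a proof by contradiction, based on a symmetric reformulation of the ABA-free condition.

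The first observation I would record is that, since both ABA and BAB patterns are forbidden (the definition being symmetric in the two hyperedges), for any two hyperedges $H_1,H_2\in\mathcal H$ the sets $H_1\setminus H_2$ and $H_2\setminus H_1$ must be \emph{totally separated in position}: all elements of one lie to one side of all elements of the other. Applied to a hyperedge $B$ that skips a vertex $a\in A$ (so $a\in A\setminus B$ while $B$ has elements on both sides of $a$), this forces $B$ to be of one of two types: ``$L$-type w.r.t.\ $a$'', meaning $B\setminus A\subseteq(-\infty,a)$ and hence $\max(B)\in A$; or ``$R$-type'', meaning $B\setminus A\subseteq(a,\infty)$ and hence $\min(B)\in A$. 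In particular, any hyperedge skipping $\min(A)$ is necessarily $L$-type, since $A$ has no element less than $\min(A)$.

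Assume for contradiction that every vertex of $A$ is skippable. Starting at $v_0=\min(A)$, iteratively build a strictly increasing sequence $v_0<v_1<\cdots$ in $A$ by choosing at step $i$ an $L$-type hyperedge $B_{i+1}$ skipping $v_i$ and setting $v_{i+1}:=\max(B_{i+1})\in A$; the step is available whenever the current vertex admits any $L$-type skipping witness. Since $|A|$ is finite, the sequence halts at some $v_s\in A$ which, being skippable by assumption, is then skippable only by hyperedges that are $R$-type with respect to $v_s$.

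Fix such an $R$-type $B'$ skipping $v_s$ and let $B:=B_s$, so $v_s=\max(B)$ and $\min(B)<\min(A)$. Applying left-right separation to $(B,B')$: $\min(B)$ lies strictly left of $A$ and so outside $B'$, giving $\min(B)\in B\setminus B'$; also $v_s=\max(B)\in B\setminus B'$ (since $v_s\notin B'$) and $\max(B')\in B'\setminus B$ (since $\max(B')>v_s=\max(B)$). Therefore $B\setminus B'$ lies entirely to the left of $B'\setminus B$, which forces $\min(B')\in B$ (otherwise $\min(B')\in B'\setminus B$ would sit to the left of $v_s\in B\setminus B'$, violating the separation). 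Hence $\min(B')\in B\cap A$; combining this with an element of $A\cap B'$ strictly greater than $v_s$ (or, failing that, with an element of an earlier chain hyperedge $B_j$, $j<s$) produces a forbidden ABA pattern among the sets $A, B, B'$.

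The main obstacle I expect is the corner case $B'\cap A\subseteq(-\infty,v_s]$, where the triple above cannot be formed from $(B,B')$ alone. I would handle this by exploiting the entire chain $B_1,\dots,B_s$ and iterating the separation argument backward along the chain until the forbidden pattern emerges; alternatively, running the symmetric ``leftward'' sweep starting from $\max(A)$ should short-circuit the analysis, since the two sweeps together cover every relative position that $B'$ can take.
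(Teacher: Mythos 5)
Your preliminary observations are sound: the ``total separation'' of $H_1\setminus H_2$ and $H_2\setminus H_1$ is exactly the paper's partial order $<$ on hyperedges, and your $L$/$R$-type classification of skipping witnesses matches the paper's notion of rightskipping. The chain construction is also in the spirit of the paper's argument (which works with the largest vertex of $A$ that is not rightskippable). However, the final step --- deriving a forbidden pattern ``among the sets $A,B,B'$'' --- is a genuine gap, and it cannot be closed as stated. Take vertices $1<2<3<4<5$, $A=\{2,3,4\}$, $B_1=\{1,3,4\}$ and $B'=\{3,5\}$. Here $B_1$ is an $L$-type witness skipping $v_0=\min(A)=2$, so $v_1=\max(B_1)=4$; the only skipping witness of $4$ is $B'$, which is $R$-type, so your sweep halts with $s=1$ and $B=B_1$. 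All of your intermediate deductions check out ($\min(B')=3\in A\cap B$, and $B\setminus B'=\{1,4\}$ lies entirely left of $B'\setminus B=\{5\}$), yet the three sets $A,B,B'$ are mutually ABA-free: no forbidden pattern exists among them, there is no element of $A\cap B'$ above $v_s$, and there is no earlier chain hyperedge to fall back on. The lemma is of course not violated in this example --- the vertex $3$ is unskippable --- but that is precisely the point: your argument never invokes the skippability of the vertices of $A$ that the chain jumps over (here $3=\min(B')$), and without doing so no contradiction is available. This is where the paper's proof differs crucially: instead of jumping to $\max(B)$, it passes to $b_2$, the \emph{smallest} element of $B$ above the current vertex, shows $b_2\in A$, and then uses a skipping witness $C$ of $b_2$ to produce the forbidden pattern (between $B$ and $C$, not involving $A$ at all).

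A secondary, smaller issue: you assert $\min(B_s)<\min(A)$, but $B_s$ only skips $v_{s-1}$, so for $s\ge 2$ you only get $\min(B_s)<v_{s-1}$, and $\min(B_s)$ may well lie in $A$. (This particular step can be routed around, since the direction of the separation between $B$ and $B'$ already follows from $v_s\in B\setminus B'$ and $\max(B')\in B'\setminus B$, but it should be repaired.) The proposed rescue via a symmetric leftward sweep is not developed and, in the example above, would simply halt at the unskippable vertex $3$ --- again showing that the missing ingredient is the skippability of the intermediate vertices of $A$, not a cleverer combination of the sets you already have.
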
 
\begin{remark}
Note that finiteness (recall that we have supposed that all our hypergraphs are finite) is needed, as the hypergraph whose vertex set is $\mathbb Z$ and hyperedge set is $\{ \mathbb Z\setminus \{n\}\mid n\in \mathbb Z\}$ is ABA-free without unskippable vertices.
\end{remark}
\begin{proof}[Proof of Lemma~\ref{lem:unskippable}] 
Take an arbitrary set $A\in \HH$, suppose that it does not contain an unskippable vertex, we will reach a contradiction.
Call $a\in A$ {\em rightskippable} if there is a $B\in \HH$ rightskipping $a$, that is for which $a\in A\setminus B$ and
there are $b_1, b_2\in B$ such that $b_1<a<b_2$ where $b_2\in B\setminus A$. 

If $A$ contains no unskippable vertex, $\max(A)$ must be rightskippable (any set skipping $\max(A)$ must also rightskip $\max(A)$). Also, $\min(A)$ cannot be rightskippable, as otherwise $A$ and the set $B$ rightskipping $\min(A)$ would violate ABA-freeness (we would get $b_1<\min(A)<b_2$ where $b_1,b_2\in B\setminus A, \min(A)\in A\setminus B$).
Therefore we can take the largest $a\in A$ that is not rightskippable.
By the assumption, it is skipped by a set, call it $B$, i.e., $b_1<a<b_2$ where $b_1, b_2\in B \not\owns a$.
Moreover, suppose without loss of generality that $b_2$ is the smallest element of $B$ which is bigger than $a$.
Since $a$ is not rightskippable, $b_2\in A$ must also hold. 
As $b_2\in A$ is rightskippable, there is a $C$ such that $c_1<b_2<c_2$ where $c_1, c_2\in C$ and $b_2\notin C, c_2\notin A$.
Without loss of generality, suppose that $c_1$ is the largest element of $C$ which is smaller than $b_2$. 
If $c_1<a$, then $C$ would rightskip $a$, a contradiction.
Thus, $b_1<a\le c_1,$ and from the choice of $b_2$ we conclude that $c_1\notin B$.
As $c_2\notin A$, also $c_2\notin B$, otherwise $B$ would rightskip $a$.
Putting all together, we get $c_1<b_2<c_2$, thus $B$ and $C$ contradict ABA-freeness.
\end{proof}

\begin{defi}
A hypergraph is called {\em containment-free} if none of its hyperedges contains another hyperedge.\footnote{Equivalently, the hyperedges form an {\em antichain}. This property is also called {\em Sperner}.}
A hypergraph ${\cal H}'$ is a {\em subhypergraph} of a hypergraph $\cal H$ on vertex set $S$ if we can get ${\cal H}'$ by taking a subset $S'\subset S$ as its vertex set and the family of the hyperedges of ${\cal H}'$ is a subfamily of the hyperedges of $\cal H$ restricted to $S'$.
We call a hypergraph property $\cal P$ {\em hereditary} if for every hypergraph $\cal H$ that has property $\cal P$, all of its subhypergraphs also have property $\cal P$.
\end{defi}

\begin{obs}\label{induced}
	ABA-freeness is a hereditary property.
\end{obs}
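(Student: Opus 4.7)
The plan is direct and essentially definition-chasing, so the main task is to set up the restriction carefully and check that a forbidden configuration in the subhypergraph lifts back to one in the original.

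First I would fix notation. Let $\mathcal{H}$ be an ABA-free hypergraph on an ordered vertex set $S$, and let $\mathcal{H}'$ be a subhypergraph on $S'\subseteq S$, whose hyperedges form a subfamily of $\{H\cap S'\mid H\in\mathcal{H}\}$. I equip $S'$ with the restriction of the ordering on $S$. If $\mathcal{H}$ was originally given without an order, I fix any ordering witnessing ABA-freeness and restrict it.

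Next, I would argue by contradiction. Suppose $\mathcal{H}'$ is not ABA-free. Then there are hyperedges $A', B'\in\mathcal{H}'$ and vertices $x<y<z$ in $S'$ with $x,z\in A'\setminus B'$ and $y\in B'\setminus A'$. By the definition of a subhypergraph, pick $A,B\in\mathcal{H}$ with $A'=A\cap S'$ and $B'=B\cap S'$. Since $x,z\in S'$ and $x,z\in A'$, we have $x,z\in A$; since $x,z\notin B'=B\cap S'$ and $x,z\in S'$, we have $x,z\notin B$, so $x,z\in A\setminus B$. Symmetrically, $y\in B\setminus A$. Because the ordering on $S'$ is the restriction of the ordering on $S$, the inequalities $x<y<z$ hold in $S$ as well. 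Thus $A, B\in\mathcal{H}$ together with $x<y<z$ witness a forbidden ABA-pattern in $\mathcal{H}$, contradicting its ABA-freeness.

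There is no real obstacle here; the only subtlety is that the definition of ABA-freeness for an unordered hypergraph only requires the existence of some witnessing order. For the inherited statement on $\mathcal{H}'$ we simply reuse (the restriction of) a witnessing order for $\mathcal{H}$, so the argument goes through in both the ordered and the unordered setting. Hence ABA-freeness is hereditary.
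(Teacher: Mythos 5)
Your proof is correct; the paper states this as an observation without giving an argument, and your definition-chasing proof (restricting the witnessing order to $S'$ and lifting any forbidden $x<y<z$ pattern from $A\cap S'$, $B\cap S'$ back to $A$, $B$) is exactly the intended justification.
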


We further assume in the rest of the paper that our hypergraphs are nonempty in the sense that they contain at least one hyperedge which is not the empty set.
Notice that for an ABA-free containment-free hypergraph the ordering $<$ of its sets is a total order, i.e., any two hyperedges are comparable.


\begin{lem}\label{lem:spshallow}
If $\mathcal H$ is ABA-free and containment-free, then any minimal hitting set of $\mathcal H$ that contains only unskippable vertices is $2$-shallow.
\end{lem}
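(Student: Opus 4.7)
The plan is to argue by contradiction. Suppose $R$ is a minimal hitting set of $\HH$ consisting only of unskippable vertices, and that some hyperedge $H\in\HH$ contains three elements $r_1<r_2<r_3$ of $R$. By minimality of $R$, for each $i\in\{1,2,3\}$ there is a ``private'' hyperedge $H_i\in\HH$ with $H_i\cap R=\{r_i\}$. I would focus on $H_2$, the private witness for the middle element, and derive a forbidden ABA pattern between $H$ and $H_2$.

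The first step is to localize $H_2$ strictly inside the interval $(r_1,r_3)$. Since $r_2\in H_2$ with $r_2>r_1$, we have $\max(H_2)\ge r_2>r_1$; combined with $r_1\notin H_2$, the unskippability of $r_1$ forces $\min(H_2)\ge r_1$, and in fact $\min(H_2)>r_1$ because $r_1\notin H_2$. A symmetric argument using $r_3\notin H_2$ and the unskippability of $r_3$ yields $\max(H_2)<r_3$. Thus every element of $H_2$ lies strictly between $r_1$ and $r_3$. The second step uses containment-freeness: since $r_1\in H\setminus H_2$, the hyperedges $H$ and $H_2$ are distinct, and containment-freeness rules out $H_2\subseteq H$. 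Hence there exists some $y\in H_2\setminus H$, and by the previous step $r_1<y<r_3$. But now $r_1,r_3\in H\setminus H_2$ and $y\in H_2\setminus H$ with $r_1<y<r_3$, which is exactly the configuration forbidden by ABA-freeness applied to the pair $(H,H_2)$ — contradiction. Since $R$ is a hitting set, $|R\cap H|\ge 1$ is automatic, so this shows $1\le|R\cap H|\le 2$ for every $H\in\HH$.

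The delicate part of the argument is really the localization $H_2\subseteq(r_1,r_3)$: this is the only place where unskippability is used, and it is also where the proof would break if one dropped the assumption that $R$ consists of unskippable vertices. Once the localization is established, containment-freeness and ABA-freeness combine cleanly, and no further case analysis should be needed.
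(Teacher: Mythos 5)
Your proof is correct and follows essentially the same route as the paper's: both isolate the private hyperedge of the middle element of $R\cap H$, use unskippability of the two outer elements to confine it to the open interval between them, and then play ABA-freeness against containment-freeness. The only difference is cosmetic ordering — the paper concludes $H_2\subseteq H$ and contradicts containment-freeness last, whereas you invoke containment-freeness to extract a witness $y\in H_2\setminus H$ and finish with the forbidden ABA pattern.
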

\begin{proof}
Let $R$ be a minimal (for containment) hitting set of unskippable vertices.
Assume to the contrary that there exists a set $A$ such that $|A\cap R|\ge 3$.
Let $l=\min(A\cap R)$ and $r=\max(A\cap R)$. 
There exists a third vertex $l<a<r$ in $A\cap R$.
We claim that $R'=R\setminus \{a\}$ hits all sets of $\mathcal H$, contradicting the minimality of $R$.
Assume on the contrary that $R'$ is disjoint from some $B\in \mathcal H$.
As $R$ must hit $B$, we have $R\cap B=\{a\}$.
If there is a $b\in B\setminus A$ such that $l<b<r$, that would contradict the ABA-free property.
If there is a $b\in B$ such that $b<l<a$ or $a<r<b$, that would contradict that $l$ and $r$ are unskippable.
Thus $B\subset A$, contradicting that $\mathcal H$ is containment-free.
\end{proof}

\begin{lem}\label{lem:abashallow}
Every containment-free ABA-free hypergraph has a $2$-shallow hitting set.
\end{lem}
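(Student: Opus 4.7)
The plan is to combine Lemmas~\ref{lem:unskippable} and \ref{lem:spshallow} in the most direct way. Let $U$ denote the set of unskippable vertices of $\HH$. By Lemma~\ref{lem:unskippable}, every hyperedge $A\in\HH$ contains at least one unskippable vertex, so $U$ itself is a hitting set of $\HH$.

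Next, I would pass to a minimal (for containment) subset $R\subseteq U$ that is still a hitting set of $\HH$; such an $R$ exists because $\HH$ is finite and $U$ is a hitting set, so we can keep removing vertices as long as the remaining set hits every hyperedge. By construction $R$ consists only of unskippable vertices and is a minimal hitting set, so Lemma~\ref{lem:spshallow} applies and yields $|R\cap A|\le 2$ for every $A\in\HH$. Since $R$ is a hitting set, also $|R\cap A|\ge 1$ for every $A\in\HH$, so $R$ is a $2$-shallow hitting set by Definition~\ref{defi:shallow}.

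There is essentially no obstacle here, since both key lemmas have already been established; the only thing to notice is that the two lemmas fit together cleanly precisely because ``unskippable vertex'' is the right notion for both (Lemma~\ref{lem:unskippable} guarantees existence of unskippables inside each hyperedge, and Lemma~\ref{lem:spshallow} needs the hitting set to consist exclusively of unskippables to rule out the ``outside skipping'' case in its argument). The containment-free hypothesis is inherited directly from the statement and is only used through Lemma~\ref{lem:spshallow}.
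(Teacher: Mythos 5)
Your proof is correct and follows exactly the same route as the paper's: start with the set of all unskippable vertices (a hitting set by Lemma~\ref{lem:unskippable}), pass to a minimal hitting subset, and invoke Lemma~\ref{lem:spshallow} to get $2$-shallowness. The extra remarks about why the two lemmas interlock are accurate but not needed.
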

\begin{proof}
Given a containment-free ABA-free hypergraph, take the set of all unskippable vertices, it is a hitting set by Lemma~\ref{lem:unskippable}. Then we can delete vertices from this set until it becomes a minimal hitting set, which is $2$-shallow by Lemma \ref{lem:spshallow}.
\end{proof}

Now we present an abstract and generalized version of the framework of \cite{SY12} to give polychromatic $k$-colorings of hypergraphs. 

\begin{thm}\label{algo:sy}
	Assume that $\cal P$ is a hereditary hypergraph property such that every containment-free hypergraph with property $\cal P$ has a $c$-shallow hitting set.
	Then every hypergraph $\cal H$ with hyperedges of size at least $ck-(c-1)$ that has property $\cal P$ admits a polychromatic $k$-coloring, i.e., a coloring of its vertices with $k$ colors such that every hyperedge of $\cal H$ contains vertices of all $k$ colors.
\end{thm}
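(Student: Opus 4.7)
The plan is to induct on $k$. The base case $k=1$ is immediate: color every vertex with color $1$; each hyperedge has size at least $c\cdot 1-(c-1)=1$, so it contains a vertex of color $1$. For the inductive step, I would extract one color class using the shallow hitting set hypothesis, peel it off, and apply induction to what remains.

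More concretely: let $\HH'\subseteq\HH$ be the subfamily of inclusion-minimal hyperedges of $\HH$. Since $\HH$ is finite and nonempty, so is $\HH'$, and it is an antichain, hence containment-free. As $\HH'$ arises from $\HH$ by keeping the same vertex set and discarding some hyperedges, it is a subhypergraph of $\HH$, so by hereditariness it has property $\cal P$. The hypothesis of the theorem then gives a $c$-shallow hitting set $R$ of $\HH'$. Color every vertex of $R$ with color $k$, and consider the restriction of $\HH$ to $V(\HH)\setminus R$, whose hyperedges are the sets $H\setminus R$ for $H\in\HH$; this restriction again has property $\cal P$ by hereditariness, and we will apply the inductive hypothesis to it using colors $1,\dots,k-1$.

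The key verification is the size bound $|H\setminus R|\ge c(k-1)-(c-1)$ for every $H\in\HH$. For this, the central observation is that since $\HH$ is finite, every $H\in\HH$ contains some inclusion-minimal $H'\in\HH'$. Then $R\cap H\supseteq R\cap H'\neq\emptyset$, so $R$ actually hits every hyperedge of $\HH$ (not just those in $\HH'$), and by $c$-shallowness of $R$ on $\HH'$,
\[
|H\setminus R|\ge |H'\setminus R|\ge |H'|-c\ge \bigl(ck-(c-1)\bigr)-c=c(k-1)-(c-1).
\]
So the inductive hypothesis applies and yields a polychromatic $(k-1)$-coloring of $V(\HH)\setminus R$ in colors $1,\dots,k-1$. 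Combined with color $k$ on $R$, we obtain a $k$-coloring of $V(\HH)$ in which every $H\in\HH$ contains color $k$ (from $R\cap H$) and every other color (from the induced coloring on $H\setminus R$).

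The main conceptual obstacle — and the whole point of the minimality trick — is that the hypothesis gives a $c$-shallow hitting set only for the containment-free subfamily $\HH'$, not for $\HH$ itself. Passing to the inclusion-minimal hyperedges and then transferring both the hitting property and the lower bound on $|H\setminus R|$ along inclusions $H'\subseteq H$ resolves this. This is exactly the abstraction of the Smorodinsky–Yuditsky induction scheme, with the role of the convex-position hitting set played by an arbitrary $c$-shallow hitting set furnished by property $\cal P$.
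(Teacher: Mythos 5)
Your proof is correct and is essentially the paper's own argument: the paper runs the same peeling procedure iteratively ($k-1$ rounds of reducing to a containment-free subfamily, extracting a $c$-shallow hitting set as one color class, and deleting it), with identical size bookkeeping $ck-(c-1)\to c(k-1)-(c-1)$. Your inductive phrasing, and your explicit transfer of the hitting and size bounds from a minimal $H'\subseteq H$ to all of $\HH$, just makes precise a step the paper leaves implicit when it discards the larger hyperedges.
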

\begin{proof}
 We present an algorithm that gives a polychromatic $k$-coloring.
First, we repeat $k-1$ times ($i=1,\dots ,k-1$) the {\em general step} of the algorithm:
	
	At the beginning of step $i$ we have a hypergraph $\cal H$ with hyperedges of size at least $ck-ci+1$ that has property $\cal P$.
	If any hyperedge contains another, then delete the bigger hyperedge.
	Repeat this until no hyperedge contains another, thus making our hypergraph containment-free.
	Next, take a $c$-shallow hitting set (using our assumptions), and color its vertices with the $i$-th color.
	Delete these vertices from $\cal H$ (the hyperedges of the new hypergraph are the ones induced by the remaining vertices).
	As $\cal P$ is hereditary, the new hypergraph also has property $\cal P$ and we can proceed to the next step.
	
	After $k-1$ iterations of the above, we are left with a $1$-uniform hypergraph whose vertices we can color with the $k$-th color.
\end{proof}

First, we use this algorithm to give a polychromatic $k$-coloring of the vertices of an ABA-free hypergraph with hyperedges of size at least $2k-1$.

\begin{thm}\label{thm:ABA}
	Given an ABA-free ${\mathcal H}$ we can color its vertices with $k$ colors such that every $A\in {\mathcal H}$ whose size is at least $2k-1$ contains all $k$ colors.
\end{thm}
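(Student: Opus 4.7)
The plan is to apply Theorem \ref{algo:sy} with $c = 2$ and with $\mathcal{P}$ being the ABA-free property. The two hypotheses of that theorem are already in hand: Observation \ref{induced} states that ABA-freeness is hereditary, and Lemma \ref{lem:abashallow} states that every containment-free ABA-free hypergraph admits a $2$-shallow hitting set. Substituting $c = 2$ into the threshold $ck - (c-1)$ from Theorem \ref{algo:sy} yields exactly $2k - 1$, which matches the hypothesis on hyperedge size in the statement.

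Concretely, I would first pass to the subhypergraph $\mathcal{H}'$ consisting of those hyperedges of $\mathcal{H}$ of size at least $2k-1$; deleting hyperedges never creates a forbidden $ABA$-pattern, so $\mathcal{H}'$ is still ABA-free. All hyperedges of $\mathcal{H}'$ now have size at least $2k - 1$, so Theorem \ref{algo:sy} produces a $k$-coloring of the vertex set in which every hyperedge of $\mathcal{H}'$ receives all $k$ colors. Vertices of $\mathcal{H}$ that lie in no hyperedge of $\mathcal{H}'$ can be assigned any color, which finishes the argument.

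There is no real obstacle at this step: all the substantive work has been carried out earlier, namely that every hyperedge contains an unskippable vertex (Lemma \ref{lem:unskippable}), that minimal hitting sets of unskippables are $2$-shallow (Lemma \ref{lem:spshallow}), and the iterative peel-off framework of Smorodinsky--Yuditsky (Theorem \ref{algo:sy}). The present theorem is simply the instantiation of that framework at $c = 2$, and I expect the written proof to be only one or two lines long.
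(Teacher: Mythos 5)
Your proposal is correct and matches the paper's proof exactly: both invoke Theorem~\ref{algo:sy} with $c=2$, citing Observation~\ref{induced} for hereditariness and Lemma~\ref{lem:abashallow} for the $2$-shallow hitting sets. Your extra remark about first discarding hyperedges smaller than $2k-1$ is a harmless clarification the paper leaves implicit.
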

\begin{proof}
By Observation~\ref{induced} ABA-freeness is a hereditary property. Together with Lemma \ref{lem:abashallow} we get that all the assumptions of Theorem~\ref{algo:sy} with $c=2$ hold for ABA-free hypergraphs with hyperedges of size at least $2k-1$ and thus we get a required $k$-coloring. 
\end{proof}

Notice that the above theorem is sharp, as taking $\mathcal H$ to be all subsets of size $2k-2$ from $2k-1$ vertices, in any coloring of the vertices, one color must occur at most once and is thus missed by some hyperedge.

We state another corollary of Lemma~\ref{lem:unskippable} that we need later.
Before that, we need another simple claim.

\begin{prop}\label{prop:insert} If we insert a new vertex, $v$, somewhere into the (ordered) vertex set of an ABA-free hypergraph, $\HH$, and add $v$ to every hyperedge that contains a vertex before and another vertex after $v$, then we get an ABA-free hypergraph.
\end{prop}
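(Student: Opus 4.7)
The plan is to proceed by contradiction: suppose the modified hypergraph $\HH'$ contains two hyperedges $A', B'$ and vertices $x<y<z$ forming a forbidden pattern $x,z\in A'\setminus B'$, $y\in B'\setminus A'$. Write $A,B\in\HH$ for the original hyperedges (so $A'$ equals $A$, possibly augmented by $v$, and similarly for $B'$). I would then do a short case analysis based on which of $x,y,z$ coincides with $v$, and in each case derive an ABA pattern for $A,B$ in the original hypergraph. The key observation to exploit is that, by definition, $v\in A'$ iff $A$ contains some vertex below $v$ and some vertex above $v$ (call this property ``$A$ straddles $v$'').

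Case $v\notin\{x,y,z\}$: membership in $A'$ versus $A$ agrees on these three vertices, and the order is inherited from $\HH$, so $A,B$ already form an ABA pattern in $\HH$.

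Case $y=v$: then $v\in B'\setminus A'$ forces $A$ to \emph{not} straddle $v$. But $x,z\in A$ with $x<v<z$ exhibit exactly that $A$ straddles $v$ — an immediate contradiction.

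Case $x=v$: then $A$ straddles $v$ but $B$ does not, so every element of $B$ lies on a single side of $v$. Since $y\in B$ with $y>v$, all of $B$ lies above $v$. Pick any $w\in A$ with $w<v$ (exists because $A$ straddles $v$). Then $w\notin B$, and the triple $w<y<z$ gives $w,z\in A\setminus B$, $y\in B\setminus A$: an ABA pattern in $\HH$.

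Case $z=v$: symmetric to the previous case; pick $w\in A$ with $w>v$, and the triple $x<y<w$ yields the forbidden pattern in $\HH$.

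I don't expect any real obstacle beyond careful bookkeeping about when $v$ is added to a hyperedge; the substantive idea is simply that whenever $v$ itself appears in the supposed ABA pattern of $\HH'$, the defining condition ``straddles $v$'' supplies a replacement vertex on the correct side of $v$ that lets us transport the pattern back to $\HH$, contradicting the assumed ABA-freeness.
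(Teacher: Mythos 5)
Your proof is correct and follows essentially the same route as the paper's: assume a forbidden pattern in $\HH'$, observe that $v\in A'$ iff $A$ straddles $v$, and when $v$ occurs in the pattern use the straddling condition to substitute a vertex of the original hypergraph on the appropriate side, transporting the pattern back to $\HH$. The only difference is organizational (you case-split on which of $x,y,z$ equals $v$, while the paper splits on which hyperedge contains $v$); the substance is identical.
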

\begin{proof} We show that if in the new hypergraph, $\HH'$, two hyperedges $A'$ and $B'$ violate ABA-freeness, then we can find two hyperedges $A$ and $B$ in the original hypergraph, $\HH$, that also violate ABA-freeness, which would be a contradiction.
We define $A=A'\setminus\{v\}$ and $B=B'\setminus\{v\}$.
If both $A'$ and $B'$ contain or do not contain $v$, then by definition $A$ and $B$ also violate the condition.
If, say, $v\in A'$ and $v\notin B'$, then without loss of generality we can suppose that all the vertices of $B=B'$ are before $v$.
This means that if there are $x<y<z$ such that $x,z\in A'\setminus B'$ and $y\in B'\setminus A'$, then necessarily $v=z$.
But as $A'$ has an element $z'$ that is bigger than $v$, we have $x,z'\in A\setminus B$ and $y\in B\setminus A$, a contradiction.
\end{proof}

\begin{lem}\label{prop:slack}
If $\mathcal H$ is ABA-free, $A\in\HH $, then there is a vertex $a\in A$ such that $\HH\cup\{A\setminus\{a\}\}$ is also ABA-free.
\end{lem}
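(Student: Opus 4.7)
The plan is to prove the lemma by a careful extremal iteration on the elements of $A$, controlled by the ABA-freeness of $\HH$. First, I characterize when adding $A\setminus\{a\}$ breaks ABA-freeness with a specific $B\in\HH$. If $a\notin B$, then the removal only shrinks $A\setminus B$ while leaving $B\setminus A$ unchanged, so no new ABA pattern can appear; we therefore assume $a\in B$. Using the fact that for any ABA-free pair $A,B$ either $A\setminus B$ lies entirely before $B\setminus A$ (``case $\alpha$''), or entirely after (``case $\beta$''), or one of the two differences is empty, a short analysis shows that $(A\setminus\{a\},B)$ violates ABA-freeness iff one of the following holds: in case $\alpha$, $a<\max(A\setminus B)$; in case $\beta$, $a>\min(A\setminus B)$; or $B\subsetneq A$ with $\min(A\setminus B)<a<\max(A\setminus B)$. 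Thus $a\in A$ is safe exactly when no $B\in\HH$ containing $a$ meets the corresponding condition.

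The strategy is to locate a safe $a$ by an iterative extremal procedure. Initialize $a:=\max(A)$. Since $\max(A)\ge\max(A\setminus B)$ trivially, this is automatically safe against all case-$\alpha$ hyperedges and against every $B\subsetneq A$ containing $\max(A)$; the only possible obstruction is a case-$\beta$ hyperedge $B_0\in\HH$ with $\max(A)\in B_0$. If no such $B_0$ exists we are done; otherwise pick one and replace $a$ by $\min(A\setminus B_0)\in A\setminus B_0$. At each subsequent step, if the current $a$ is safe we stop; otherwise pick any witness $B\ni a$ and update $a$ to $\max(A\setminus B)$ (when $B$ is case $\alpha$) or $\min(A\setminus B)$ (when $B$ is case $\beta$ or $B\subsetneq A$), producing a new candidate lying in $A\setminus B$.

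The main obstacle is termination: ruling out that the iteration enters a cycle. I claim a cycle forces two of the witnesses to violate ABA-freeness of $\HH$, contradicting the hypothesis. Consider the shortest possible cycle, of length two: $a_0=\max(A)$ fails via a case-$\beta$ witness $B_0$, then $a_1:=\min(A\setminus B_0)$ fails via a case-$\alpha$ witness $B_1$, and $a_2:=\max(A\setminus B_1)$ equals $a_0=\max(A)$. Then $\max(A)\in B_0\setminus B_1$ (since $a_2=\max(A)\in A\setminus B_1$) and $a_1\in B_1\setminus B_0$, with $a_1<\max(A)$. Pick any $p\in B_0\setminus A$; the case-$\beta$ property of $B_0$ forces $p<\min(A\setminus B_0)=a_1$. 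If $p\in B_1$ then $p\in B_1\setminus A$, and the case-$\alpha$ property of $(A,B_1)$ would force $p>\max(A\setminus B_1)=\max(A)$, contradicting $p<\max(A)$. Hence $p\in B_0\setminus B_1$, and the ordered triple $p<a_1<\max(A)$ with $p,\max(A)\in B_0\setminus B_1$ and $a_1\in B_1\setminus B_0$ is an ABA pattern in the pair $(B_0,B_1)$, contradicting ABA-freeness of $\HH$. Longer cycles are ruled out by applying the same reasoning to consecutive witness pairs around the cycle, so the iteration terminates with a safe $a\in A$.
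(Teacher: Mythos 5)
Your setup is correct and carefully done: the characterization of when adding $A\setminus\{a\}$ creates an ABA-pattern with a $B\ni a$ (via the trichotomy $A\setminus B$ before / after $B\setminus A$, or $B\subsetneq A$) is right, the observation that $\max(A)$ can only be blocked by a case-$\beta$ witness is right, and your analysis of the two-step cycle is a valid derivation of an ABA-pattern between $B_0$ and $B_1$. The gap is the last sentence. Your two-cycle argument hinges on the fact that $B_1$ sends $a_1$ back to $\max(A)$, i.e.\ $\max(A\setminus B_1)=\max(A)$, which forces $\max(A)\notin B_1$ and thereby supplies the third point $\max(A)\in B_0\setminus B_1$ of the pattern. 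For a consecutive pair inside a longer cycle this is exactly what is unavailable, and the claim that ``the same reasoning applied to consecutive witness pairs'' rules out longer cycles is false. Concretely, on the ordered vertex set $0<1<\dots<6$ take $A=\{1,2,3,4,5\}$, $B_0=\{0,1,2,5\}$, $B_1=\{3,5,6\}$. This family is ABA-free; $B_0$ is a case-$\beta$ witness at $5=\max(A)$ sending the candidate to $3=\min(A\setminus B_0)$, and $B_1$ is a case-$\alpha$ witness at $3$ sending it to $4=\max(A\setminus B_1)$ --- precisely a consecutive (case $\beta$, case $\alpha$) pair of the shape your argument addresses --- yet $B_0\setminus B_1=\{0,1,2\}$ lies entirely before $B_1\setminus B_0=\{3,6\}$, so there is no ABA-pattern between $B_0$ and $B_1$ and none can be derived (here $5\in B_1$, so your third point is simply missing). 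Hence for cycles of length at least three no consecutive pair is forced to conflict; in the small cases I could check the obstruction to a cycle comes from the \emph{joint} unsatisfiability of all the constraints, with the conflicting pair varying depending on the choices made. Termination of your iteration therefore remains unproven: you would need a monotonicity or potential argument (e.g.\ that the interval spanned by the candidates shrinks), a more careful rule for selecting the witness, or a genuinely global argument about all witnesses of a hypothetical cycle.

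For comparison, the paper sidesteps this difficulty entirely by inducting on $|A|$: it picks an unskippable vertex $v\in A$ (Lemma~\ref{lem:unskippable}), deletes it, applies the induction hypothesis to $A\setminus\{v\}$ in the smaller hypergraph, and re-inserts $v$ in the spirit of Proposition~\ref{prop:insert}. If you wish to keep your direct extremal approach, the essential missing lemma is that the sequence of candidates can never revisit a vertex; that statement carries all of the difficulty and is not established by the consecutive-pair reasoning.
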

\begin{proof}
If $|A|=1$, then trivially $\mathcal H$ can be extended with $\emptyset$.
If $|A|>1$, then we proceed by induction on the size of $A$.
Using Lemma~\ref{lem:unskippable}, there is an unskippable vertex $v\in A$.
Delete this vertex from $\HH$ to obtain some ABA-free $\HH_v$ and let $A_v=A\setminus \{v\}$. 
Using induction on $A_v$, there is an $A_v'=A_v\setminus\{a\}$ such that $\HH_v\cup\{A_v'\}$ is also ABA-free.
We claim that with $A'=A_v'\cup \{v\}=A\setminus\{a\}$, the family $\HH\cup\{A'\}$ is also ABA-free.

Notice that adding back $v$ to $\HH_v$ is very similar to the operation of Proposition~\ref{prop:insert}, as $v$ is unskippable in $\HH$.
The only difference is that we might also have to add it to some further hyperedges, ending in or starting at $v$.
But a hyperedge that contains $v$ cannot violate the ABA-free condition with $A'$, since it also contains $v$, so the corresponding hyperedges in $\HH_v$ would also violate the ABA-free condition.
\end{proof}

Notice that with the repeated application of Lemma~\ref{prop:slack} we can extend any ABA-free hypergraph, such that in any set $A$ there is a vertex $a$ for which $\{a\}$ is a singleton hyperedge, implying that $a$ is unskippable in $A$.
Thus in fact Lemma~\ref{prop:slack} is equivalent to Lemma~\ref{lem:unskippable}.
Moreover, in Section~\ref{sec:pseudo}, in the more general context of pseudohalfplanes, it will be the abstract equivalent of a known and important property of pseudohalfplanes.

We prove another interesting property of ABA-free hypergraphs before which we need the following definition.

\begin{defi}\label{def:dual} The dual of a hypergraph $\mathcal H$, denoted by $\mathcal H^*$, is such that its vertices are the hyperedges of $\mathcal H$ and its hyperedges are the vertices of $\mathcal H$ with the same incidences as in $\mathcal H$.
\end{defi}

\begin{prop}\label{prop:dual} If ${\mathcal H}$ is ABA-free, then its dual ${\mathcal H^*}$ is also ABA-free (with respect to some ordering of its vertices).
\end{prop}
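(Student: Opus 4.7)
The natural plan is to use the partial order $<$ on the hyperedges of $\mathcal{H}$ constructed at the beginning of Section~\ref{sec:ABA} (where transitivity was verified) as the basis for the vertex ordering of $\mathcal{H}^*$. Since $<$ need not be total when $\mathcal{H}$ contains pairs of hyperedges related by containment, I would extend it to an arbitrary linear order $\prec$ on the hyperedges of $\mathcal{H}$, which is always possible for any partial order. This $\prec$ is my proposed vertex ordering of $\mathcal{H}^*$.

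To verify ABA-freeness of $\mathcal{H}^*$ under $\prec$, I would suppose for contradiction that there are two distinct hyperedges $h_{v_1}, h_{v_2}$ of $\mathcal{H}^*$ (corresponding to distinct vertices $v_1, v_2$ of $\mathcal{H}$) and three vertices $A \prec B \prec C$ of $\mathcal{H}^*$ forming an ABA pattern. Unfolding the dual incidences, this is equivalent to $v_1 \in A \cap C$, $v_1 \notin B$, $v_2 \in B$, and $v_2 \notin A \cup C$. Now split into two cases according to the relative order of $v_1$ and $v_2$ in the vertex ordering of $\mathcal{H}$.

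If $v_1 < v_2$, then $v_1 \in C \setminus B$ and $v_2 \in B \setminus C$ directly witness $C < B$ in the hyperedge partial order, and since $\prec$ extends $<$ this forces $C \prec B$, contradicting $B \prec C$. If instead $v_2 < v_1$, then $v_2 \in B \setminus A$ and $v_1 \in A \setminus B$ witness $B < A$, hence $B \prec A$, contradicting $A \prec B$. Either way one reaches a contradiction, so $\mathcal{H}^*$ is ABA-free with respect to $\prec$.

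The main obstacle is choosing the right ordering on the hyperedges; once one recognises that the partial order $<$ from the top of Section~\ref{sec:ABA} is engineered exactly to capture a two-vertex interchange between a pair of hyperedges, the three-hyperedge ABA signature on the dual side automatically produces two incompatible hyperedge comparisons, and the rest of the argument is a direct unwinding of definitions.
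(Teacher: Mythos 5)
Your proposal is correct and follows essentially the same route as the paper: extend the partial order $<$ on the hyperedges of $\mathcal H$ to a total order, use it as the vertex order of $\mathcal H^*$, and derive from a hypothetical ABA pattern in the dual a comparison between two of the three hyperedges that contradicts the chosen total order. The paper compresses your two cases into one via a without-loss-of-generality on the order of $v_1$ and $v_2$, but the argument is identical.
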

\begin{proof} 
Take the partial order ``$<$'' of the hyperedges of $\mathcal H$ and extend this arbitrarily to a total order $<^*$.
We claim that ${\mathcal H^*}$ is ABA-free if its vertices are ordered with respect to $<^*$.
To check the condition, suppose for a contradiction that $H_x<^*H_y<^*H_z$ and $a\in (H_x\cap H_z)\setminus H_y$ and $b\in H_y\setminus (H_x\cup H_z)$.
Without loss of generality, suppose that $a<b$.
But in this case $H_z<H_y$ holds, contradicting $H_y<^*H_z$.
\end{proof}

\begin{cor}
The hyperedges of every ABA-free hypergraph can be colored with $k$ colors, such that if a vertex $v$ is in a subfamily $\HH_v$ of at least $m(k)=2k-1$ of the hyperedges of $\HH$, then $\HH_v$ contains a hyperedge from each of the $k$ color classes.
\end{cor}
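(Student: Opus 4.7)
The plan is to derive this from Theorem~\ref{thm:ABA} by passing to the dual. By Proposition~\ref{prop:dual}, the dual $\mathcal H^*$ of an ABA-free hypergraph $\mathcal H$ is itself ABA-free (with respect to some ordering of its vertex set, which is all that the definition and all subsequent tools require). Unpacking Definition~\ref{def:dual}, the vertex set of $\mathcal H^*$ is the hyperedge set of $\mathcal H$, and the hyperedge of $\mathcal H^*$ corresponding to a vertex $v$ of $\mathcal H$ is precisely the subfamily $\HH_v=\{H\in\mathcal H : v\in H\}$. In particular, the size of this dual hyperedge equals the degree of $v$ in $\mathcal H$.

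Having set this up, I would simply feed $\mathcal H^*$ into Theorem~\ref{thm:ABA}. That theorem produces a coloring of the vertices of $\mathcal H^*$ with $k$ colors so that every hyperedge of $\mathcal H^*$ of size at least $2k-1$ contains vertices of all $k$ colors. Translating back through the duality, this is exactly a $k$-coloring of the hyperedges of $\mathcal H$ with the property that whenever $|\HH_v|\ge 2k-1$, the family $\HH_v$ meets every color class, which is the claim.

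I do not foresee a genuine obstacle; the entire content of the corollary sits in the two ingredients already proved, namely Proposition~\ref{prop:dual} and Theorem~\ref{thm:ABA}. The one point I would make explicit for the reader is that the $(2k-1)$-uniformity hypothesis on $\mathcal H$ is not actually used in the argument---it appears in the statement only as the natural dual counterpart of the uniformity hypothesis in the corresponding formulation of Theorem~\ref{thm:ABA} and ensures that the subfamilies $\HH_v$ being compared to $2k-1$ are the meaningful ones. A second, very minor, thing worth a sentence is that although Proposition~\ref{prop:dual} guarantees ABA-freeness of $\mathcal H^*$ only with respect to a particular total order extending the partial order on hyperedges of $\mathcal H$, Theorem~\ref{thm:ABA} and all of its supporting lemmas (unskippable vertices, shallow hitting sets, the general algorithm in Theorem~\ref{algo:sy}) apply to any ABA-free hypergraph regardless of which compatible ordering is chosen, so no additional care is needed.
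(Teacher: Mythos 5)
Your proposal is correct and is exactly the paper's (implicit) argument: the corollary is stated immediately after Proposition~\ref{prop:dual} precisely because it follows by dualizing and applying Theorem~\ref{thm:ABA} to $\mathcal H^*$, whose hyperedges are the subfamilies $\HH_v$. Your side remarks---that the uniformity hypothesis is not actually needed and that the choice of total order extending the partial order on hyperedges is immaterial---are both accurate.
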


\begin{cor}
Any $(2k-1)$-fold covering of a finite point set with the translates of an unbounded convex planar set is decomposable into $k$ coverings.
\end{cor}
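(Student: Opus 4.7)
The plan is to reduce the covering statement to the preceding corollary about ABA-free hypergraphs, using Example~\ref{egyikpelda} and Proposition~\ref{prop:dual} to set up the ABA-free structure in the dual form.

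First I would prepare the geometry. Let $C$ be the unbounded convex planar set whose translates form the covering, and let $S$ be the finite point set. Being unbounded and convex, $C$ contains a halfline; by rotating the plane, which preserves both the covering and the $(2k-1)$-fold property, I may assume this halfline is vertical (say pointing upwards). A further generic infinitesimal rotation (or perturbation) ensures that all points of $S$ have pairwise distinct $x$-coordinates and that no translate used in the covering has a point of $S$ on its boundary; these adjustments do not affect which points are covered by which translates.

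Next I would pass to the ABA-free setting. Since $C$ contains a vertical halfline, Example~\ref{egyikpelda} applies: let $\HH$ be the hypergraph whose vertex set is the $x$-coordinates of $S$, with one hyperedge for each translate of $C$ used in the covering, namely the set of $x$-coordinates of points of $S$ that this translate contains. Then $\HH$ is ABA-free. Form the dual $\HH^*$: its vertices are the translates, and for every point $p\in S$ there is a hyperedge consisting of all translates that contain $p$. By Proposition~\ref{prop:dual}, $\HH^*$ is also ABA-free (with respect to an appropriate ordering of the translates), and because the original covering is $(2k-1)$-fold, every hyperedge of $\HH^*$ has size at least $2k-1$.

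Finally I would invoke Theorem~\ref{thm:ABA} on $\HH^*$ to obtain a $k$-coloring of its vertices, that is, a $k$-coloring of the translates, such that every hyperedge of size at least $2k-1$ contains all $k$ colors. Since every point $p\in S$ corresponds to such a hyperedge, each $p$ is covered by at least one translate of each color class; hence each color class is itself a covering of $S$, giving the desired decomposition. This is essentially a direct translation of the preceding corollary into its geometric incarnation, so I do not expect any real obstacle; the only subtle point is remembering that one must dualize via Proposition~\ref{prop:dual} to switch from coloring points (as in Example~\ref{egyikpelda}) to coloring translates.
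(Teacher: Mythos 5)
Your proof is correct and follows exactly the route the paper intends: Example~\ref{egyikpelda} gives ABA-freeness of the point--translate incidence hypergraph, Proposition~\ref{prop:dual} transfers this to the dual, and Theorem~\ref{thm:ABA} yields the polychromatic coloring of the translates, which is precisely the content of the corollary immediately preceding this one. The geometric preprocessing (rotating the halfline to vertical, perturbing to get distinct $x$-coordinates) is a reasonable and harmless addition that the paper leaves implicit.
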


In fact, there is a slightly different proof for Proposition \ref{prop:dual}. For that we give an equivalent definition of ABA-free hypergraphs in relation to their incidence matrices, which will be useful also for other purposes later. In an \emph{incidence matrix} of a hypergraph $\HH$, rows correspond to the vertices of $\HH$, columns correspond to the hyperedges of $\HH$. An entry is $1$ if the hyperedge corresponding to the column contains the vertex corresponding to the row, and $0$ otherwise. Note that this is not unique as we can order the rows and columns arbitrarily. We say that a matrix $M$ \emph{contains} another matrix $P$ if $P$ is a submatrix of $M$. If $M$ does not contain $P$, then it is called \emph{$P$-free}. 

\begin{thm}\label{thm:matrix}
	Given a hypergraph $\HH$, the following are equivalent:
	\begin{itemize}
		\item[(a)] $\HH$ is an ABA-free hypergraph,
		\item[(b)] there is a permutation of the rows of the incidence matrix of $\HH$ such that the matrix becomes  
		$\begin{bmatrix} 
		0 & 1  \\
		1 & 0  \\
		0 & 1  	
		\end{bmatrix}$-free and 
				$\begin{bmatrix} 
		1 & 0  \\
		0 & 1  \\
		1 & 0  	
		\end{bmatrix}$-free,
		\item[(c)] there is a permutation of the rows and columns of the incidence matrix of $\HH$ such that the matrix becomes  
		$\begin{bmatrix} 
		0 & 1  \\
		1 & 0 
		\end{bmatrix}$-free.
	\end{itemize}
\end{thm}	

\begin{proof}
	First, ordering the vertices of the hypergraph corresponds to permuting the rows of its incidence matrix. Thus, the equivalance of $(a)$ and $(b)$ follows from the definition of ABA-free hypergraphs. 
		
	To prove $(c)\rightarrow(b)$, suppose $(b)$ is false, i.e., that in any permutation of the rows of the indicence matrix of $\HH$ there is an occurrence of one of the two matrices forbidden in $(b)$. In any permutation of the two columns of these two matrices forbidden in $(b)$, we get back one of these two matrices, both of which contains a copy of $\begin{bmatrix} 
	0 & 1  \\
	1 & 0 
	\end{bmatrix}$. Thus by any permutation of the rows and columns of the incidence matrix of $\HH$, we get a matrix that contains $\begin{bmatrix} 
	0 & 1  \\
	1 & 0 
	\end{bmatrix}$. Thus we can conclude that $\neg(b)\rightarrow \neg(c)$, which is the contrapositive of $(c)\rightarrow(b)$.
	
	 Finally, extending to a complete order the partial ordering  ``$<$'' defined on the hyperedges at the beginning of this section, Proposition \ref{prop2.2} implies that by permuting the columns according to any extension of this order ``$<$'' of the hyperedges we get a $\begin{bmatrix} 
	 0 & 1  \\
	 1 & 0 
	 \end{bmatrix}$-free matrix, and thus
	 $(b)\rightarrow (c)$.
\end{proof}

Now observe that in Theorem \ref{thm:matrix} the property in $(c)$ holds for an incidence matrix if and only if it holds for its transpose  (as the forbidden matrix in $(c)$ is its own transpose). Taking the transpose of an incidence matrix in terms of the hypergraph means taking the dual of the hypergraph, thus Proposition \ref{prop:dual} follows.

\section{Pseudohalfplanes}\label{sec:pseudo}
Here we extend a result of Smorodinsky and Yuditsky~\cite{SY12}.
A {\em pseudoline arrangement} is a finite collection of simple curves in the plane such that each curve cuts the plane into two components (i.e., both endpoints of each curve are at infinity) and any two of the curves are either disjoint or intersect once, and in the intersection point they {\em cross}, meaning that any finite perturbation of the curves contains an intersection point.
We also suppose that the curves are in general position, i.e., no three curves have a common point.
Some well-known results about pseudoline arrangements are collected in Appendix~\ref{app:facts}, which can be found in \cite{B99}.
We also recommend \cite{D07} where generalizations of classical theorems are proved for {\em topological affine planes}.
From these, it follows that the hypergraphs defined by points contained in pseudohalfplanes are exactly the ones that have the following structure.

\begin{defi}\label{def:pseudo}
A hypergraph $\HH$ on an ordered set of points $S$ is called a  {\em pseudo\-halfplane-hypergraph} if there exists an ABA-free hypergraph $\FF$ on $S$ such that $\HH\subset\FF\cup \bar{\FF}$.
\end{defi}

Note that  $\bar{\FF}$ is also ABA-free with the same ordering of the points.
We refer to the hyperedges of a pseudohalfplane-hypergraph also as pseudohalfplanes.

Using Lemma~\ref{prop:slack} on a hyperedge of a pseudohalfplane-hypergraph, we get the following.

\begin{prop}\label{fact:szukit}
Given a pseudohalfplane-hypergraph $\HH$, and a hyperedge $A$ of $\HH$, we can add a new hyperedge $A'$ contained completely in $A$ that contains all but one of the points of $A$, such that $\HH$ remains a pseudohalfplane-hypergraph.
\end{prop}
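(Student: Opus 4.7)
The plan is to reduce the statement directly to Lemma~\ref{prop:slack} by exploiting the symmetry between $\FF$ and $\bar\FF$ in Definition~\ref{def:pseudo}. Before doing anything else, I would record the short observation that $\bar\FF$ is ABA-free whenever $\FF$ is: if $\bar A,\bar B\in\bar\FF$ admitted a witness $x<y<z$ with $x,z\in\bar A\setminus\bar B$ and $y\in\bar B\setminus\bar A$, then rewriting in terms of $A,B\in\FF$ gives $x,z\in B\setminus A$ and $y\in A\setminus B$, i.e.\ the same ABA pattern with the roles of $A$ and $B$ interchanged, contradicting ABA-freeness of $\FF$. Consequently a pseudohalfplane-hypergraph defined via $\FF$ is equally well defined via $\bar\FF$, so I may assume without loss of generality that $A\in\FF$, simply swapping the roles of $\FF$ and $\bar\FF$ otherwise.

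With $A\in\FF$ in hand, I would invoke Lemma~\ref{prop:slack} on the ABA-free family $\FF$ and its hyperedge $A$ to obtain some $a\in A$ for which $\FF':=\FF\cup\{A\setminus\{a\}\}$ is still ABA-free. Setting $A':=A\setminus\{a\}$, the inclusions $\HH\subset\FF\cup\bar\FF\subset\FF'\cup\bar{\FF'}$ together with $A'\in\FF'\subset\FF'\cup\bar{\FF'}$ certify that $\HH\cup\{A'\}$ is still a pseudohalfplane-hypergraph witnessed by the ABA-free family $\FF'$, which is exactly what we need.

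I do not anticipate any real obstacle here: the only nontrivial point is the closure of ABA-freeness under complementation, and that is a short symbol-pushing argument as above. Once it is in place, everything else is a one-step application of Lemma~\ref{prop:slack}, and there is no need to case-split according to whether $A$ lies on the ``positive'' or ``negative'' side of the arrangement, since the $\FF$–$\bar\FF$ symmetry absorbs that distinction entirely.
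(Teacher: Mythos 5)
Your proof is correct and matches the paper's approach: the paper derives this proposition in one line from Lemma~\ref{prop:slack}, having already noted after Definition~\ref{def:pseudo} that $\bar\FF$ is ABA-free with the same ordering (which is exactly the symmetry you use to assume $A\in\FF$). You have simply made explicit the details the paper leaves implicit, and all steps check out.
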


In the geometric setting this corresponds to the known and useful fact that given a pseudohalfplane arrangement and a finite set of points $A$ contained in the pseudohalfplane $H$, we can add a new pseudohalfplane $H'$ contained completely in $H$ that contains all but one of the points of $A$.

Now we show how to extend Theorem~\ref{thm:ABA} to pseudohalfplane arrangements, i.e., to the case when the points of $S$ {\em below} a line also define a hyperedge.

\begin{thm}\label{thm:primalpshp}
Given a finite set of points $S$ and a pseudohalfplane arrangement $\HH$, we can color $S$ with $k$ colors such that any pseudohalfplane in $\HH$ that contains at least $2k-1$ points of $S$ contains all $k$ colors. Equivalently, the vertices $S$ of a pseudohalfplane-hypergraph can be colored with $k$ colors such that any hyperedge containing at least $2k-1$ points contains all $k$ colors. 
\end{thm}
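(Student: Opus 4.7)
The plan is to deduce the theorem from the general coloring algorithm of Theorem~\ref{algo:sy}, applied with $c = 2$ and with $\mathcal{P}$ taken to be the property of being a pseudohalfplane-hypergraph. This reduces the task to verifying that $\mathcal{P}$ is hereditary and that every containment-free pseudohalfplane-hypergraph admits a $2$-shallow hitting set.

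Hereditariness is straightforward from Definition~\ref{def:pseudo}: if $\HH \subset \FF \cup \bar\FF$ on $S$ with $\FF$ ABA-free, then for any $S' \subseteq S$ we have $\HH|_{S'} \subset \FF|_{S'} \cup \overline{\FF|_{S'}}$ (with complements taken inside $S'$), and $\FF|_{S'}$ is ABA-free by Observation~\ref{induced}.

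The heart of the proof is the $2$-shallow hitting set lemma, which I plan to establish by imitating Lemma~\ref{lem:abashallow}. First note that $\bar\FF$ is also ABA-free, since complementing every hyperedge turns the forbidden pattern into itself with $A, B$ swapped for $\bar B, \bar A$. Call a vertex \emph{extreme} if it is unskippable for $\FF$ or for $\bar\FF$; by Lemma~\ref{lem:unskippable} applied to $\FF$ and to $\bar\FF$ separately, every hyperedge of $\HH$ contains an extreme vertex, so the extreme vertices form a hitting set. Let $R$ be a minimal hitting set of $\HH$ consisting of extreme vertices. To show $R$ is $2$-shallow, I would suppose for contradiction that some $A \in \HH$ has three elements $l < a < r$ in $A \cap R$; by minimality, some $B \in \HH$ meets $R$ only at $a$. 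The goal is to force $B \subseteq A$ (or $A \subseteq B$ by symmetry), contradicting containment-freeness. In the ``matched'' subcase where $A, B$ lie in the same family and both $l, r$ are unskippable for that family, the argument of Lemma~\ref{lem:spshallow} applies verbatim: unskippability confines $B$ to $[l, r]$, and ABA-freeness then forces $B \subseteq A$.

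The main obstacle is the mixed subcases in which the unskippability type of $l$ or $r$ does not match the family containing $B$, and Lemma~\ref{lem:spshallow}'s argument no longer applies directly. I expect to overcome this by refining the construction of $R$ as $R_1 \cup R_2$: $R_1$ a minimal hitting set of $\HH \cap \FF$ drawn from the unskippable-for-$\FF$ vertices, and $R_2$ a minimal hitting set of $\HH \cap \bar\FF$ drawn from the unskippable-for-$\bar\FF$ vertices. Lemma~\ref{lem:spshallow} then gives $2$-shallowness within each side, and the cross-family bound has to be extracted from containment-freeness together with the ABA-freeness of $\bar\FF$, possibly using Proposition~\ref{fact:szukit} to replace a hyperedge by a shrunken version exhibiting a smaller forbidden configuration. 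Once the $2$-shallow hitting set lemma is proved, plugging it into Theorem~\ref{algo:sy} with $c = 2$ immediately yields the polychromatic $k$-coloring with $m(k) = 2k - 1$ asserted in the theorem.
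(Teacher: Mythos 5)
Your overall strategy coincides with the paper's: reduce to Theorem~\ref{algo:sy} with $c=2$, and prove that every containment-free pseudohalfplane-hypergraph has a $2$-shallow hitting set built from vertices that are unskippable in $\FF$ or in $\bar\FF$ (your ``extreme'' vertices are exactly the paper's top/bottom vertices, and taking a minimal hitting set of $\HH$ from among them is exactly the paper's Lemma~\ref{lem:shallow}). However, there is a genuine gap: the mixed cases that you explicitly defer (``I expect to overcome this\dots'', ``the cross-family bound has to be extracted\dots'') are precisely the entire content of the paper's argument, and the fallback you sketch does not close them. Writing $R=R_1\cup R_2$ with $R_1$ a minimal hitting set of $\HH\cap\FF$ and $R_2$ one of $\HH\cap\bar\FF$ gives, via Lemma~\ref{lem:spshallow}, only that $R_1$ meets each upset at most twice and $R_2$ meets each downset at most twice; it gives no bound at all on $|D\cap R_1|$ for a downset $D$ or $|U\cap R_2|$ for an upset $U$, and the union need not be a minimal hitting set of $\HH$, so you lose the ``private hyperedge'' witnesses that minimality provides. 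Proposition~\ref{fact:szukit} does not supply the missing control either.

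The ingredient you are missing is the paper's Observation~\ref{obs:topindown}: if $x$ is a top vertex contained in a downset $X$, then $X$ contains \emph{all} vertices on one side of $x$ (and symmetrically for bottom vertices in upsets). This is what lets the paper, given $l<a<r$ in $A\cap R$ with (say) $a$ top and $B\cap R=\{a\}$, first conclude that $B$ must be an upset (a downset through the top vertex $a$ would swallow $l$ or $r$), and then run a case analysis on whether $A$ is an upset or a downset, repeatedly invoking Observation~\ref{obs:topindown} and the ABA-freeness of the singleton hyperedges $\{l\},\{a\},\{r\}$ against $A$ and $B$ to reach a contradiction with containment-freeness. Until you prove this observation and carry out that case analysis (or find a genuine substitute for it), the $2$-shallowness claim, and hence the theorem, is not established.
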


\begin{remark} The similar statement is not true for the union of two arbitrary ABA-free hypergraphs (instead of an ABA-free hypergraph and its complement), as the union of two arbitrary ABA-free hypergraphs might not be $2$-colorable, see \cite{P14} for such a construction.
\end{remark}

\begin{proof}[Proof of Theorem~\ref{thm:primalpshp}]\label{app:primalpshp}
Our proof is completely about the abstract setting, yet it translates naturally to the geometric setting, also the figures illustrate the geometric interpretations.

By definition there exists an ABA-free $\FF$ such that $\HH\subset \FF\cup \bar{\FF}$. Call $\UU=\HH\cap \FF$ the upsets and $\DD=\HH\cap \bar{\FF}$ the downsets, observe that both $\UU$ and $\DD$ are ABA-free. 

Further, the unskippable vertices of $\UU$ (resp.\ $\DD$) are called top (resp.\ bottom) vertices. The top and bottom vertices are called the unskippable vertices of $\HH$. Recall that by adding these unskippable vertices as one-element hyperedges to $\HH$, $\HH$ remains a pseudohalfplane-hypergraph, as we can extend $\FF$ and $\bar{\FF}$ with the appropriate hyperedge (this is a convenient way of thinking about top/bottom vertices in the geometric setting, as seen later in the figures).

\begin{obs}\label{obs:topindown} If $x$ is top and $X$ is a downset and $x\in X$, then $X$ contains all vertices that are bigger or all vertices that are smaller than $x$. The same holds if $x$ is bottom, $X$ is an upset and $x\in X$.
\end{obs}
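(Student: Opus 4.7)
The proof goes by contradiction; the two halves are symmetric under interchanging $\FF\leftrightarrow\bar\FF$ (and $\UU\leftrightarrow\DD$), so I would treat the top/downset case and then invoke the analogous argument applied to $\bar\FF$ for the bottom/upset case.

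Suppose $x$ is top, $X\in\DD$ contains $x$, yet there is a vertex $y>x$ with $y\notin X$ and a vertex $z<x$ with $z\notin X$. Because $X\in\DD\subset\bar\FF$, its complement $\bar X$ lies in the ABA-free family $\FF$. From $y,z\in\bar X$ and $x\notin\bar X$ we obtain $\min(\bar X)\le z<x<y\le\max(\bar X)$, so $\bar X$ skips $x$ in $\FF$.

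To convert this into a contradiction with $x$ being top (i.e.\ unskippable in $\UU$), I would invoke the remark preceding the observation and extend $\HH$ to all of $\FF\cup\bar\FF$. This keeps $\HH$ a pseudohalfplane-hypergraph (since the underlying $\FF$ is unchanged), and the coloring condition of Theorem~\ref{thm:primalpshp} only tightens when more hyperedges are added, so such an extension is harmless. After the extension one has $\UU=\FF$, hence $\bar X\in\UU$ is an upset that skips $x$, contradicting that $x$ is unskippable in $\UU$. The dual statement for a bottom vertex and an upset $X$ follows by the same reasoning applied to $\bar\FF$, using that the complement of an upset is a hyperedge of $\bar\FF$.

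The main (and really only) subtle point is the identification of "unskippable in $\UU$" with "unskippable in $\FF$"; without such an identification the complement $\bar X$, though guaranteed to be in $\FF$, need not be a hyperedge of $\UU$ and the contradiction would be blocked. Once this harmless enlargement of $\HH$ is in place, the observation is a one-line consequence of the ABA-free skipping structure of $\FF$ (resp.\ $\bar\FF$) together with the complement-swapping translation between upsets and downsets.
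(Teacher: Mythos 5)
The paper states this observation without proof, and the intended justification is precisely your central step: if a downset $X$ contained $x$ but missed some $z<x$ and some $y>x$, then $\bar X\in\FF$ would skip $x$, which is incompatible with $x$ being unskippable. So the mathematical core of your proposal is the right one, and you have correctly isolated the genuinely delicate point: being ``top'' only gives unskippability in $\UU=\HH\cap\FF$, whereas the contradiction needs unskippability in $\FF$ (at least in $\UU\cup\bar\DD$), because $\bar X$ lies in $\FF$ but need not lie in $\HH$.

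Your way of closing that gap, however, does not work. Enlarging $\HH$ to all of $\FF\cup\bar\FF$ changes which vertices are top: $\UU$ grows to $\FF$, so the set of unskippable vertices of $\UU$ shrinks, and a vertex that is top for the original $\HH$ need not be top for the enlarged one -- so you end up proving the statement under a strictly stronger hypothesis than the one given. The implication you would need, ``unskippable in $\UU$ implies unskippable in $\FF$,'' is simply false: take $S=\{1,2,3\}$, $\FF=\{\{1,3\}\}$ and $\HH=\{\{2\}\}\subset\bar\FF$; then $\UU=\emptyset$, so $2$ is vacuously top, yet $\{1,3\}\in\FF$ skips $2$, and indeed the downset $X=\{2\}$ contains $2$ but neither all larger nor all smaller vertices. (So the observation with the paper's literal definition of ``top'' fails in degenerate cases; this is a gap in the paper as well.) The appeal to Theorem~\ref{thm:primalpshp} being ``only tightened'' is also beside the point: the observation is consumed inside Lemma~\ref{lem:shallow}, where $\HH$ must be containment-free and $R$ a minimal hitting set of $\HH$, and neither hypothesis survives replacing $\HH$ by $\FF\cup\bar\FF$. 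The correct repair is not to enlarge $\HH$ but to strengthen the definition: call $x$ top when it is unskippable in $\FF$ itself (equivalently, when $\{x\}$ can be added to $\FF$ keeping it ABA-free, in line with the remark preceding the observation). Lemma~\ref{lem:unskippable} applied to $\FF$ and to $\bar\FF$ still guarantees that every upset contains a top vertex and every downset a bottom one, so the hitting-set construction is unaffected, and with this reading your one-line ABA-freeness argument between $\{x\}$ and $\bar X$ proves the observation exactly as intended.
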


\begin{lem}\label{lem:shallow}
If  $\HH$ is a containment-free pseudohalfplane-hypergraph, then any minimal hitting set of $\mathcal H$ that contains only unskippable vertices is $2$-shallow.
\end{lem}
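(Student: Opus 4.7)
The overall strategy is the same as in Lemma~\ref{lem:spshallow}. Assume for contradiction that $R$ is a minimal hitting set of unskippable vertices but $|A\cap R|\ge 3$ for some $A\in\HH$. By replacing the defining ABA-free family $\FF$ with $\bar\FF$---which swaps upsets with downsets and exchanges the labels ``top'' and ``bottom'' while preserving the notion of ``unskippable''---we may assume $A$ is an upset. Let $l=\min(A\cap R)$, $r=\max(A\cap R)$, and pick any $a\in A\cap R$ with $l<a<r$. We aim to show that $R\setminus\{a\}$ is still a hitting set, contradicting minimality. Suppose otherwise, so some $B\in\HH$ satisfies $B\cap R=\{a\}$, and split cases on whether $B$ is an upset or a downset.

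If $B$ is an upset, then $A,B\in\FF$ are ABA-free: any $b\in B\setminus A$ with $l<b<r$ would produce a forbidden ABA-pattern with $l,r\in A\setminus B$. For $b\in B$ with $b<l$, the upset $B$ skips $l$; this directly contradicts unskippability if $l$ is top. Otherwise $l$ is bottom, and Observation~\ref{obs:topindown} applied to $l\in A$ gives $A\supseteq\{v:v<l\}$ (forcing $b\in A$, contradiction) or $A\supseteq\{v:v>l\}$, in which case we further apply Observation~\ref{obs:topindown} to $a\in B$: if $a$ is bottom, $B$ contains all vertices on one side of $a$, putting $l$ or $r$ into $B\cap R$, a contradiction; if $a$ is top, a symmetric chain of reasoning on $r$ (using whether $r$ is top or bottom) either yields an immediate contradiction or forces $B\subseteq A$, which together with $|B\cap R|=1<|A\cap R|$ contradicts containment-freeness. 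If $B$ is a downset, then $\bar B\in\FF$ is an upset containing $l,r$ but not $a$, so $\bar B$ skips $a$; thus $a$ cannot be top and hence is bottom. Observation~\ref{obs:topindown} applied to $a\in A$ then gives $A\supseteq\{v:v>a\}$ or $A\supseteq\{v:v<a\}$, and combining this with the fact that no downset containing $a$ can skip bottom vertices $l$ or $r$ (if those endpoints are bottom) pins $B$ inside an interval entirely contained in $A$, giving a containment-freeness contradiction; the sub-cases where $l$ or $r$ is top are handled by ABA-freeness of the pair $(A,\bar B)\in\FF$ together with the half-line constraint on $A$ coming from $a$ being bottom.

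The main obstacle is the heavy case analysis needed because ABA-freeness by itself controls only same-type pairs (two upsets or two downsets); mixed-type pairs require passing to complements, and the ``skipping'' argument of Lemma~\ref{lem:spshallow} only applies to the ``correct'' side of top/bottom for each hyperedge type. Observation~\ref{obs:topindown} is the key tool in every non-trivial sub-case, since it converts a bottom vertex of an upset (or top vertex of a downset) into a half-line containment, which we chain with ABA-freeness until either $B\subseteq A$ or some other $R$-vertex is forced into $B$.
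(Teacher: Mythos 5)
Your toolkit is the right one (minimality of $R$, Observation~\ref{obs:topindown}, ABA-freeness inside $\UU$ and inside $\DD$, containment-freeness), but the case analysis does not close, and the reason is structural: you only ever invoke the minimality of $R$ at the \emph{middle} vertex $a$, via the single witness $B$ with $B\cap R=\{a\}$; you never use the witnesses for $l$ or $r$. Concretely, in your ``$B$ is an upset'' branch, consider the sub-case where some $b\in B\setminus A$ has $b<l$, so $l$ is bottom, $A\supseteq\{v:v>l\}$, and $a$ is top. Your claimed dichotomy fails there: $B\subseteq A$ is impossible (that very $b$ witnesses $B\not\subseteq A$), and knowing whether $r$ is top or bottom yields nothing. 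Indeed, $S=\{1<2<3<4\}$ with $A=\{2,3,4\}$ and $B=\{1,3\}$, both upsets, is a containment-free ABA-free configuration satisfying every fact you have derived at that point ($l=2$ bottom, $a=3$ top, $b=1$), with no contradiction in sight --- precisely because $\{2,3,4\}$ is not a minimal hitting set of $\{A,B\}$. The contradiction only appears once you bring in a witness $C$ with $C\cap R=\{r\}$: by containment-freeness $C\not\subseteq A\supseteq\{l\}\cup\{v:v>l\}$, so $C$ has an element smaller than $l$, hence $C$ skips both $l$ (which is bottom) and $a$ (which is top), so $C$ can be neither a downset nor an upset. This use of a second witness, attached to an outer vertex, is exactly the step the paper's proof performs and your sketch omits.

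Two further points. First, in the downset branch you deduce ``$a$ is not top'' from ``$\bar B\in\FF$ skips $a$''; but top-ness is defined with respect to $\UU=\HH\cap\FF$, and $\bar B$ need not be a hyperedge of $\HH$, so it cannot witness skippability. The conclusion is salvageable in one line --- if $a$ were top, Observation~\ref{obs:topindown} applied to the downset $B\ni a$ would force $l$ or $r$ into $B$ --- but the closing claim of that branch (``pins $B$ inside an interval entirely contained in $A$'') is not an argument, and the same missing-outer-witness problem recurs there. Second, your normalization is genuinely different from the paper's and buys less: the paper assumes WLOG that the \emph{middle} vertex is top, which via Observation~\ref{obs:topindown} immediately forces its witness $B$ to be an upset and halves the case analysis; normalizing the tripled hyperedge to be an upset, as you do, leaves both types possible for every witness and is largely why the bookkeeping gets out of hand.
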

\begin{proof} 
Let $R$ be a minimal hitting set of unskippable vertices. 
Suppose for a contradiction that $\{a,b,c\}\subset R\cap X$ and $a<b<c$ for some $X\in\HH$.
Without loss of generality, suppose that $b$ is top.
As $R$ is minimal, let $B$ be a set for which $B\cap R=\{b\}$.
From Observation~\ref{obs:topindown} it follows that $B$ is an upset.

First suppose that $X$ is an upset.
As $B\not\subset X$, take a $b_2\in B\setminus X$.
As $B$ and $X$ are both upsets and thus have the ABA-free property, we have $b_2<a$ or $c<b_2$.
Without loss of generality, we can suppose $c<b_2$.
If $c$ is top, $\{c\}$ and $B$ violate ABA-freeness. See Figure~\ref{fig:cnottop}.
If $c$ is bottom, then using Observation~\ref{obs:topindown}, $X$ contains all the vertices that are smaller than $c$.
Take a set $A\not\subset X$ for which $A\cap R=\{a\}$.
This set must contain an $a_2\in A\setminus X$ and so we must have $c<a_2$.
If $A$ is an upset, as it does not contain $b$ and recall $a<b<a_2$, $A$ and $\{b\}$ violate ABA-freeness. See Figure~\ref{fig:anotupset}.
If $A$ is a downset, as it does not contain $c$ and recall $a<c<a_2$, $A$ and $\{c\}$ violate ABA-freeness, both cases lead to a contradiction.

\begin{figure}
    \centering
    \subfloat[$c$ cannot be top]{\label{fig:cnottop}
        \includegraphics[width=0.4\textwidth]{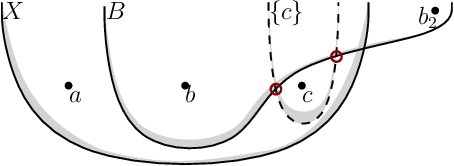}
				}    
    \hskip 20mm
    \subfloat[$A$ cannot be an upset]{\label{fig:anotupset}
        \includegraphics[width=0.4\textwidth]{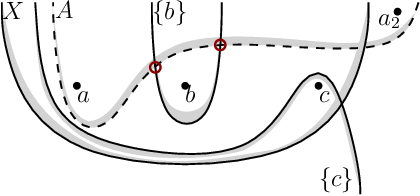}
				}
						\caption{Proof of Lemma~\ref{lem:shallow}}
\end{figure} 	 

The case when $X$ is a downset is similar.
Using Observation~\ref{obs:topindown} for $X$ and $\{b\}$ we can suppose without loss of generality that $X$ contains all vertices that are smaller than $b$.
Take a set $A\not\subset X$ for which $A\cap R=\{a\}$ and an $a_2\in A\setminus X$. As $X$ contains all vertices smaller than $b$, we have $b<a_2$.
$A$ cannot be an upset, as then it would contain $b$, so it is a downset.
If $b<a_2<c$, then $A$ and $X$ would violate ABA-freeness, thus we must have $c<a_2$.
This means $c$ cannot be bottom, so it is top.
Using Observation~\ref{obs:topindown}, $X$ contains all the vertices that are smaller than $c$.
But then $B\setminus X$ must have an element that is bigger than $c$, contradicting the ABA-freeness of $B$ and $\{c\}$.
\end{proof}



	

It is easy to see that being a pseudohalfplane hypergraph is a hereditary property. Thus, Lemma~\ref{lem:shallow} implies that all the assumptions of Theorem~\ref{algo:sy} hold with $c=2$ to get a polychromatic $k$-coloring as required. This finishes the proof of Theorem~\ref{thm:primalpshp}.
\end{proof}

\section{Dual problem and pseudohemisphere-hypergraphs}\label{sec:signed}

We are also interested in coloring pseudohalfplanes with $k$ colors such that all points that are covered many times will be contained in a pseudohalfplane of each $k$ colors.
For example, we can also generalize the dual result about coloring halfplanes of \cite{SY12} to pseudohalfplanes.

\begin{thm}\label{thm:dualpshp}
Given a pseudohalfplane arrangement $\HH$, we can color $\HH$ with $k$ colors such that
if a point $p$ belongs to a subset $\HH_p$ of at least $3k-2$ of the pseudohalfplanes of $\HH$, then
$\HH_p$ contains a pseudohalfplane of every color.
\end{thm}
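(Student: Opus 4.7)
The plan is to apply the abstract framework of Theorem~\ref{algo:sy} to the dual hypergraph $\HH^*$ (see Definition~\ref{def:dual}), whose vertex set is the collection of pseudohalfplanes of $\HH$ and whose hyperedges are the sets $\HH_p = \{H \in \HH : p \in H\}$ for each point $p$. Since $m(k) = 3k-2 = 3k-(3-1)$, I would like to invoke Theorem~\ref{algo:sy} with $c = 3$, which reduces the whole proof to two tasks: first, showing that ``being the dual of a pseudohalfplane-hypergraph'' is a hereditary hypergraph property (which is routine, since restricting to a subset of either the points or the pseudohalfplanes clearly preserves the pseudohalfplane structure); and second, showing that every containment-free $\HH^*$ of this form admits a $3$-shallow hitting set.

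For the second and main task, I would decompose $\HH = \UU \cup \DD$ into its upsets and downsets, both of which are ABA-free. By Proposition~\ref{prop:dual}, both $\UU^*$ and $\DD^*$ are ABA-free as well, so by Lemma~\ref{lem:abashallow} each admits a $2$-shallow hitting set, say $R_\UU \subseteq \UU$ and $R_\DD \subseteq \DD$, obtained as minimal hitting sets of unskippable vertices in the respective ABA-free orders. The union $R = R_\UU \cup R_\DD$ is clearly a hitting set of $\HH^*$: for any point $p$ with $\HH_p \neq \emptyset$, at least one of $\UU_p$ and $\DD_p$ is non-empty, and is therefore hit. This union is also automatically $4$-shallow, since $|R \cap \HH_p| \leq |R_\UU \cap \UU_p| + |R_\DD \cap \DD_p| \leq 2+2$.

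The hard part, and the main technical obstacle, is pushing this bound from $4$-shallow down to $3$-shallow. The key observation to exploit is that containment-freeness of $\HH^*$ is a condition linking $\UU$ and $\DD$ together, not just each side separately, so one can hope to remove a redundant element from $R_\UU \cup R_\DD$ whenever some point would be hit four times. Concretely, the plan is to mimic the strategy used in Lemma~\ref{lem:shallow}: take $R$ to be a minimal hitting set among all ``extremal'' vertices of $\HH^*$, where extremality means being unskippable in $\UU^*$ or in $\DD^*$, and then argue by contradiction that $|R \cap \HH_p| \leq 3$ for every $p$. If $\{H_1,\dots,H_4\} \subseteq R \cap \HH_p$, at least two of them lie on the same side, say in $\UU$; extremality together with minimality of $R$ should then force a set $B \in \HH$ to satisfy $B \cap R = \{H_i\}$ for one of these $H_i$, and the ABA-free ordering of $\UU$ and of $\DD$, combined with the positions of the other three hits, should produce a containment between two hyperedges of $\HH$, contradicting containment-freeness. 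The delicate part will be the case analysis, with several mixed configurations of top-type and bottom-type extremal vertices drawn from both $\UU^*$ and $\DD^*$ to be handled cleanly.
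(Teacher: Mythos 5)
Your overall framing matches the paper's: Theorem~\ref{thm:dualpshp} is indeed obtained by feeding the dual hypergraph into Theorem~\ref{algo:sy} with $c=3$, so everything reduces to showing that every containment-free dual pseudohalfplane-hypergraph has a $3$-shallow hitting set (this is exactly Theorem~\ref{thm:shitting}). But that reduction is the easy part, and the proposal does not actually prove the $3$-shallow statement: you construct $R_\UU\cup R_\DD$, observe it is (at best) $4$-shallow, and then describe the passage from $4$ to $3$ only as a plan (``the delicate part will be the case analysis\dots to be handled cleanly''). That passage is the entire content of the theorem --- a $4$-shallow hitting set only yields $m(k)=4k-3$, the pseudohalfsphere bound. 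There is also a secondary gap already in the $4$-shallow step: Lemma~\ref{lem:abashallow} requires the hypergraph to be containment-free, and containment-freeness of $\HH^*$ does not imply containment-freeness of $\UU^*$ or $\DD^*$ (one can have $\UU_p\subseteq\UU_q$ while $\HH_p\not\subseteq\HH_q$ because the downsets compensate). A $2$-shallow hitting set of the containment-free reduction of $\UU^*$ is still a hitting set of all of $\UU^*$, but it is not guaranteed to meet the non-minimal hyperedges $\UU_p$ in at most two elements, so ``automatically $4$-shallow'' needs an argument.

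More importantly, I doubt the sketched repair goes through as stated: the dual hypergraph is not of the form $\FF\cup\bar\FF$ but of the form $\{F\Delta X\mid F\in\FF\}$ for a fixed $X$ (Proposition~\ref{prop:dual2X}), so the top/bottom vertex machinery and Observation~\ref{obs:topindown} underlying Lemma~\ref{lem:shallow} do not transfer, and the containment you hope to produce should in any case be between hyperedges of $\HH^*$ (sets $\HH_p$), not of $\HH$. The paper sidesteps this entirely with a dichotomy. Either some three pseudohalfplanes of $\HH$ together cover every point --- then in the dual these three vertices form a hitting set of size $3$, which is trivially $3$-shallow --- or no three do, in which case the Helly-type Corollary~\ref{cor:hellycor} (from Lemma~\ref{lem:helly}) lets one add a point lying in none of the pseudohalfplanes, and then the polarity Lemma~\ref{lem:polar} (built on the reordering Lemmas~\ref{lem:pushback} and~\ref{lem:pushfront}) shows that $\HH^*$ is itself a pseudohalfplane-hypergraph, so Lemma~\ref{lem:shallow} applies directly and gives a $2$-shallow hitting set. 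Some such structural input (Helly plus polarity, or an equivalent) appears unavoidable; without it your argument establishes only the weaker $4k-3$ bound.
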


Theorem~\ref{thm:dualpshp} follows from Theorem~\ref{thm:shitting}, that we will state and prove later.

However, instead of coloring pseudohalfplanes, we stick to coloring points with respect to pseudohalfplanes and work with {\em dual hypergraphs}, where the vertex-hyperedge incidences are preserved, but vertices become hyperedges and hyperedges become vertices.

\begin{prop}\label{prop:dual2X}
A hypergraph $\HH$ on an ordered set of vertices $S$ is a {\em dual pseudohalfplane-hypergraph} if and only if there exists a set $X\subset S$ and an ABA-free hypergraph $\cal F$ on $S$ such that the hyperedges of $\HH$ are the hyperedges $F\Delta X$ for every $F\in \cal F$ (where $\Delta$ denotes the symmetric difference of two sets).
\end{prop}
\begin{proof}
	Recall that pseudohalfplane-hypergraphs are hypergraphs that we can get by taking the complement of some hyperedges in an ABA-free hypergraph.\footnote{According to the definition we may need to duplicate some of the hyperedges so that we have both the original and its complement, but by duplicating some hyperedges the hypergraph remains ABA-free.} In relation to their incidence matrix, using Theorem $\ref{thm:matrix}$, this means that a hypergraph is a pseudohalfplane-hypergraph if and only if there is a permutation of the rows and columns of its incidence matrix such that inverting some of the columns (i.e., exchanging $0$'s and $1$'s in these columns) we get a matrix which is  $\begin{bmatrix} 
	0 & 1  \\
	1 & 0 
	\end{bmatrix}$-free. Taking the dual of such a hypergraph means taking the transpose of such an incidence matrix. 
	
	Thus a hypergraph $\HH$ is a dual pseudohalfplane-hypergraph if and only if there is a permutation of the rows and columns of its incidence matrix such that inverting some of the rows we get a matrix which is  $\begin{bmatrix} 
	0 & 1  \\
	1 & 0 
	\end{bmatrix}$-free. Using again Theorem $\ref{thm:matrix}$ we get that this is equivalent to the fact that the incidence matrix of $\HH$ is the incidence matrix of an ABA-free hypergraph with some of the rows inverted. Finally, this is equivalent to the statement of the proposition with $X$ being the subset of vertices corresponding to the inverted rows.
\end{proof}

Now we define a common generalization of the primal and dual definitions. 

\begin{defi}\label{def:signedpseudo}
A {\em pseudohemisphere-hypergraph} is a hypergraph $\HH$ on an ordered set of vertices $S$ such that there exists a set $X\subset S$ and an ABA-free hypergraph $\cal F$ on $S$ such that the hyperedges of $\HH$ are some subset of $\{F\Delta X, \bar F \Delta X \mid F\in \cal F\}$.
\end{defi}

\begin{prop}\label{prop:dualsignedpseudo}
	The  dual of a pseudohemisphere-hypergraph is also a pseudohemi\-sphere-hypergraph.
\end{prop}
\begin{proof}
	Notice that by definition a hypergraph $\HH$ is a pseudohemisphere-hypergraph if and only if some rows and columns of its incidence matrix can be inverted such that it becomes the incidence matrix of an ABA-free hypergraph. Using Theorem $\ref{thm:matrix}$ we get that this is equivalent to the fact that we can permute the rows and columns of the incidence matrix of $\HH$ and invert some of the rows and columns to get a $\begin{bmatrix} 
	0 & 1  \\
	1 & 0 
	\end{bmatrix}$-free matrix. This property obviously holds for a matrix if and only if it holds for its transpose and thus, similarly to Proposition \ref{prop:dual2X}, we can conclude that the dual of $\HH$ is also a pseudohemi\-sphere-hypergraph.	
\end{proof}


%
%

Furthermore, there is a nice geometric representation of such hypergraphs using {\em pseudohemisphere arrangements}, a generalization of hemisphere arrangements on a sphere.

In a pseudohemisphere arrangement the pseudohemispheres are regions whose boundaries are centrally symmetric simple curves such that any two intersect exactly twice.
(For more on pseudohemisphere arrangements, see, e.g., \cite{B99}.)
Without changing the combinatorial properties of the arrangement, we can suppose that the boundary of one of the pseudohemispheres is the equator.
Using a stereographic projection from the center of the sphere such that this pseudohemisphere is mapped to a whole plane, the other pseudohemispheres are mapped to pseudohalfplanes.
Thus, we can conclude that $\HH$ is a pseudohemisphere-hypergraph if and only if there is a set of points, $S$, on the surface of a sphere and a {\em pseudohemisphere arrangement} $\FF$ on the sphere such that the incidences among $S$ and $\FF$ give $\HH$.
(Here $X$ corresponds to the points on the southern hemisphere and $S\setminus X$ to the points on the northern hemisphere.)

Another popular geometric representation on the plane, adding {\em signs} to lines and points, is the following.
The vertices correspond to a set of points in the plane together with a direction (up or down), and the hyperedges correspond to a set of (x-monotone) pseudolines with a sign ($+$ or $-$).
The hyperedge corresponding to a positive pseudoline is the set of points that point {\em towards} the pseudoline, while the hyperedge corresponding to a negative pseudoline is the set of points that point {\em away} from the pseudoline.
Positive pseudolines correspond to $\FF$, negative pseudolines to $\bar \FF$, up points correspond to $X$ and down points correspond to $\bar X$.
With this interpretation, ABA-free hypergraphs have only + and up signs, pseudohalfplane-hypergraphs have $\pm$ and up signs, dual pseudohalfplane-hypergraphs have + and up/down signs.

In the next table we summarize the best known results about these hypergraphs, with respect to how many points each hyperedge has to contain to have a polychromatic $k$-coloring and the values of the smallest $c$ for which there exists a $c$-shallow hitting set for containment-free families.

\begin{table}[h]
\begin{tabular}{|l|c|c|ll}
\cline{1-3}
 & Polychromatic $k$-coloring & Shallow hitting set &  &  \\ \cline{1-3}
ABA-free hypergraphs & $2k-1 $ (Theorem~\ref{thm:ABA}) & $2$ (Lemma~\ref{lem:spshallow}) &  &  \\ \cline{1-3}
Pseudohalfplane-hypergraphs & $2k-1$ (Theorem~\ref{thm:primalpshp}) & $2$ (Lemma~\ref{lem:shallow}) &  &  \\ \cline{1-3}
Dual pseudohalfplane-hypergraphs & $\le 3k-2$ (Theorem~\ref{thm:dualpshp})& $\le 3$ (Theorem~\ref{thm:shitting}) &  &  \\ \cline{1-3}
Pseudohemisphere-hypergraphs & $\le 4k-3$ (Corollary~\ref{cor:sphere})& $\le 4$ (Theorem~\ref{thm:shitting}) &  &  \\ \cline{1-3}
\end{tabular}
\end{table}

We conjecture that even containment-free pseudohemisphere arrangements have a $2$-shallow hitting set, which would also imply, using Theorem~\ref{algo:sy}, that any family whose sets have size at least $2k-1$ admits a polychromatic $k$-coloring. Towards this conjecture, the only result not in the table is about the special case of dual (ordinary) halfplanes, for which Fulek \cite{F11} showed that in the $k=2$ case $2k-1=3$ is the right answer. That is, he showed that we can $2$-color any family of halfplanes such that every point of the plane which belongs to at least $3$ halfplanes is covered by halfplanes of both colors.

As we can find a polychromatic $k$-coloring of the points of $X$ and $\bar X$ independently with respect to the sets of $\FF$ and $\bar \FF$, respectively, of size at least $2k-1$ using Theorem~\ref{thm:primalpshp}, the following is true.

\begin{cor}\label{cor:sphere}
Given a finite set of points $S$ on the sphere and a pseudohemisphere arrangement $\HH$, we can color $S$ with $k$ colors such that any pseudohemisphere in $\HH$ that contains at least $4k-3$ points of $S$ contains all $k$ colors. Equivalently, the vertices $S$ of a pseudohemisphere-hypergraph can be colored with $k$ colors such that any hyperedge containing at least $4k-3$ points contains all $k$ colors. 
\end{cor}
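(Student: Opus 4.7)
The plan is to split $S$ along the partition $(X,\bar X)$ provided by Definition~\ref{def:signedpseudo}, apply Theorem~\ref{thm:primalpshp} separately on each side using the same $k$-color palette, and glue the two colorings together. The threshold $4k-3 = 2(2k-1)-1$ is chosen precisely so that pigeonhole forces at least one of $|H\cap X|$, $|H\cap\bar X|$ to reach the primal threshold $2k-1$ from Theorem~\ref{thm:primalpshp}.

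First I would verify that $\HH|_X := \{H\cap X : H\in\HH\}$ is a pseudohalfplane-hypergraph on $X$. By Definition~\ref{def:signedpseudo} every $H\in\HH$ equals $F\Delta X$ or $\bar F\Delta X$ for some $F\in\FF$, and a direct computation gives $(F\Delta X)\cap X = \bar F\cap X$ and $(\bar F\Delta X)\cap X = F\cap X$. Setting $\FF|_X := \{F\cap X : F\in\FF\}$, heredity of ABA-freeness (Observation~\ref{induced}) yields that $\FF|_X$ is ABA-free on $X$, and its complementation inside $X$ is exactly $\{\bar F\cap X : F\in\FF\}$. Therefore $\HH|_X \subseteq \FF|_X \cup \overline{\FF|_X}$, which matches Definition~\ref{def:pseudo}. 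The symmetric identities $(F\Delta X)\cap \bar X = F\cap \bar X$ and $(\bar F\Delta X)\cap\bar X = \bar F\cap\bar X$ show the same for $\HH|_{\bar X}$ on the vertex set $\bar X$.

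Now apply Theorem~\ref{thm:primalpshp} to $\HH|_X$ to obtain a $k$-coloring $c_X$ of $X$ in which every $H\cap X$ with $|H\cap X|\ge 2k-1$ contains all $k$ colors, and independently apply it to $\HH|_{\bar X}$ to get a $k$-coloring $c_{\bar X}$ of $\bar X$ using the same $k$ colors. Define the coloring of $S$ by taking $c_X$ on $X$ and $c_{\bar X}$ on $\bar X$. For any $H\in\HH$ with $|H|\ge 4k-3$, since $|H\cap X|+|H\cap\bar X|=|H|\ge 2(2k-1)-1$, at least one of $|H\cap X|$ and $|H\cap\bar X|$ is $\ge 2k-1$; on that side the corresponding restriction already contains all $k$ colors, hence so does $H$.

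I do not expect any real obstacle here: the whole argument is assembly from two calls to Theorem~\ref{thm:primalpshp} plus a one-line pigeonhole. The only point that merits explicit verification is the hereditary claim that the restrictions $\HH|_X$ and $\HH|_{\bar X}$ remain pseudohalfplane-hypergraphs, and that reduces to the symmetric-difference computations above together with heredity of ABA-freeness.
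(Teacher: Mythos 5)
Your proposal is correct and is essentially the paper's own argument: the paper likewise colors $X$ and $\bar X$ independently via Theorem~\ref{thm:primalpshp} (applied to $\FF$ and $\bar\FF$ restricted to each part) and concludes by pigeonhole since $4k-3=2(2k-1)-1$. Your explicit verification that the restrictions $\HH|_X$ and $\HH|_{\bar X}$ are pseudohalfplane-hypergraphs, via the identities $(F\Delta X)\cap X=\bar F\cap X$ and $(F\Delta X)\cap\bar X=F\cap\bar X$, just spells out a step the paper leaves implicit.
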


To finish, we first prove the following theorem, 
which, using Theorem~\ref{algo:sy}, will imply Theorem~\ref{thm:dualpshp}, and also provides another proof for Corollary~\ref{cor:sphere}.

\begin{thm}\label{thm:shitting} Every containment-free dual pseudohalfplane-hypergraph has a $3$-shallow hitting set and every containment-free pseudohemisphere-hypergraph has a $4$-shallow hitting set.
\end{thm}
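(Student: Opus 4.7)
The plan is to extend the \emph{minimal hitting set of unskippable vertices} approach from Lemma~\ref{lem:spshallow} and Lemma~\ref{lem:shallow} to the two richer settings, by partitioning the vertex set along $X$ and $\bar X$ and attacking each side separately. For a dual pseudohalfplane hypergraph with defining ABA-free $\FF$ and $X\subset S$, every hyperedge has the form $F\Delta X=(F\cap\bar X)\cup(X\setminus F)$, so $\HH|_{\bar X}$ is a subfamily of the ABA-free hypergraph $\FF|_{\bar X}$ and $\HH|_X$ is a subfamily of $\bar\FF|_X$ (which is ABA-free, since $\bar\FF$ is ABA-free with the same ordering as $\FF$ and ABA-freeness is hereditary by Observation~\ref{induced}). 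For a pseudohalfsphere hypergraph, both $F\Delta X$ and $\bar F\Delta X$ may appear as hyperedges, so $\HH|_{\bar X}\subset \FF|_{\bar X}\cup \overline{\FF|_{\bar X}}$ (and analogously on $X$), meaning each restriction is a pseudohalfplane hypergraph.

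On each side I would take a $2$-shallow hitting set: in the dual pseudohalfplane case via Lemma~\ref{lem:spshallow} applied to $\FF|_{\bar X}$ and to $\bar\FF|_X$ (after first passing to their containment-free minimal subfamilies); in the pseudohalfsphere case via Lemma~\ref{lem:shallow}. The union $R_{\bar X}\cup R_X$ hits every non-empty $H\in\HH$, because $H\cap\bar X$ or $H\cap X$ is non-empty and contains a minimal hyperedge of the corresponding restriction. In the pseudohalfsphere case this gives $|(R_{\bar X}\cup R_X)\cap H|\le 2+2=4$, as required. In the dual pseudohalfplane case the same argument gives only $4$, and a sharper construction is needed to bring this to $3$; the idea is to exploit that all hyperedges come from $\FF$ alone (rather than $\FF\cup\bar\FF$), which couples the two restrictions tightly via the containment-freeness of $\HH$, so that with a careful choice of hitting vertices at most one of the two restrictions can contribute two hits to any single hyperedge.

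The main obstacle I expect is twofold. First, the restrictions $\HH|_{\bar X}$ and $\HH|_X$ may fail to be containment-free even when $\HH$ is: a containment $H\cap\bar X\subsetneq H'\cap\bar X$ can be compensated by a reverse non-containment on the $X$-side, since $\HH$ containment-free only forces one of the two restrictions to witness non-containment. So Lemma~\ref{lem:spshallow} and Lemma~\ref{lem:shallow} cannot be applied directly to the restrictions; one gets $2$-shallowness only on the minimal sub-hypergraph, and transferring this property to all hyperedges of $\HH|_{\bar X}$ requires an argument that uses the containment-freeness of the ambient $\HH$ to control how many unskippable vertices of the minimal part can sit inside a non-minimal hyperedge. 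Second, for the refinement from $4$ to $3$ in the dual pseudohalfplane case, I would need a coupling argument between the two sides showing that if $|R_{\bar X}\cap H|=2$ then $|R_X\cap H|\le 1$; I expect this to follow the pattern of the proof of Lemma~\ref{lem:shallow}, but with a more elaborate case distinction driven by whether the relevant unskippable vertices come from $\FF|_{\bar X}$ or $\bar\FF|_X$, and invoking the containment-freeness of $\HH$ as the mechanism that rules out the ``doubly doubly hit'' configuration.
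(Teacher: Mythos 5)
Your proposal is a plan rather than a proof, and both of the obstacles you flag are genuine gaps that remain unresolved. First, as you yourself note, the restrictions $\HH|_{\bar X}$ and $\HH|_X$ need not be containment-free even when $\HH$ is (a containment on the $\bar X$-side can be compensated on the $X$-side), so Lemma~\ref{lem:spshallow} and Lemma~\ref{lem:shallow} do not apply to them. Passing to the minimal subfamilies only guarantees $2$-shallowness with respect to the \emph{minimal} restricted hyperedges; a non-minimal hyperedge of the restriction is hit but could a priori contain many vertices of the hitting set, and you give no mechanism for bounding this. Second, even granting the $2+2$ count, your argument yields only a $4$-shallow hitting set for dual pseudohalfplane-hypergraphs, and the coupling claim needed to get $3$ (that $|R_{\bar X}\cap H|=2$ forces $|R_X\cap H|\le 1$) is stated as a hope, not proved; it is not clear such a coupling holds for an arbitrary independent choice of the two hitting sets. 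So the main assertion of the theorem for dual pseudohalfplane-hypergraphs is not established.

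For contrast, the paper takes a quite different route in which the constants $3$ and $4$ arise as \emph{Helly numbers}, not as sums of shallowness constants. One passes to the dual hypergraph $\hat\HH$ and distinguishes two cases: either some $3$ (resp.\ $4$) hyperedges of $\hat\HH$ cover all its vertices, in which case the corresponding $3$ (resp.\ $4$) vertices of $\HH$ form a hitting set that is trivially $3$- (resp.\ $4$-)shallow by its cardinality alone; or, by the Helly-type statements (Lemma~\ref{lem:helly} via Corollary~\ref{cor:hellycor}, and Lemma~\ref{lem:helly2} via Corollary~\ref{cor:hellycor2}), one can add to $\hat\HH$ a vertex lying in none of its hyperedges, whereupon Lemma~\ref{lem:polar} shows that $\HH$ itself is a pseudohalfplane-hypergraph and Lemma~\ref{lem:shallow} already gives a $2$-shallow hitting set. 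If you want to salvage your decomposition approach, you would need to supply the missing containment-freeness transfer and the $4\to 3$ coupling; the paper's dualization argument avoids both issues entirely.
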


The proof of this result follows again closely the argument of \cite{SY12}. 
We note that the next few statements can also be proved using the geometric representation, but here we develop further our completely abstract approach.
The reason for this is to demonstrate the power of our method, hoping that in the future it enables attacking completely different problems as well.
For an ordered set of vertices $S=Y\cup^* Z$, write $S=(Y,Z)$ if the vertices in $Y$ precede the ones in $Z$. 

\begin{lem}\label{lem:pushbackABA}\label{lem:pushback}
Suppose $\FF$ is an ABA-free hypergraph on an ordered vertex set $S=(Y,Z)$.
Then $\FF'=\FF\Delta Y=\{F\Delta Y \mid F\in\FF\}$ is an ABA-free hypergraph on the vertices ordered as $S'=(Z,Y)$, i.e., $Z$ precedes $Y$ but otherwise the order inside $Y$ and $Z$ is unchanged. 

Moreover, if $\FF$ and $X\subset S$ define a pseudohemisphere-hypergraph $\HH$, i.e., the hyperedges of $\HH$ are $\{F\Delta X\mid F\in\FF\}$ and $\{\bar F\Delta X\mid F\in\FF\}$, then $\FF'$ and $X'=X\Delta Y\subset S'$ also define the same (if unordered) pseudohemisphere-hypergraph $\HH'$.
\end{lem}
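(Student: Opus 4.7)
The plan is to verify the ABA-free property of $\FF'$ by a direct case analysis on the new ordering, using two simple membership identities, and then to deduce the ``moreover'' part by a short XOR calculation.

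First I would record the two elementary facts that drive everything. For any $F\in\FF$ and any vertex $v$: if $v\in Z$, then $v\in F\Delta Y\iff v\in F$; if $v\in Y$, then $v\in F\Delta Y\iff v\notin F$. In particular, for $A'=A\Delta Y,B'=B\Delta Y\in\FF'$ and $v\in Y$, we have $v\in A'\setminus B'\iff v\in B\setminus A$ (and symmetrically), while for $v\in Z$ the sets $A'\setminus B'$ and $A\setminus B$ coincide. Note also that $A'\Delta B'=A\Delta B$.

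Next I would prove the ABA-freeness of $\FF'$ (ordered by $S'=(Z,Y)$) by assuming an ABA configuration $x<'y<'z$ with $x,z\in A'\setminus B'$ and $y\in B'\setminus A'$, and deriving a contradiction in $\FF$. Because $Z$ precedes $Y$ in $S'$, the pattern of the triple among $(Z,Y)$ must be one of $ZZZ$, $ZZY$, $ZYY$, or $YYY$. In the $ZZZ$ case the order and memberships are unchanged, giving an ABA for $(A,B)$ in $\FF$. In the $YYY$ case the order is unchanged but the roles of $A,B$ flip on $Y$, giving an ABA for $(B,A)$ in $\FF$. The mixed cases are the heart of the argument: in the $ZYY$ case, the original order is $y<z<x$ in $S$, and the memberships translate to $x\in A\setminus B$, $y\in A\setminus B$, $z\in B\setminus A$, which is an ABA for $(A,B)$; in the $ZZY$ case, the original order is $z<x<y$ in $S$, with $x\in A\setminus B$ and $y,z\in B\setminus A$, which is an ABA for $(B,A)$. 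Each case contradicts the ABA-freeness of $\FF$, which finishes the first statement.

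For the moreover part, I would just observe that $\overline{F\Delta Y}=\bar F\Delta Y$ (both equal $(F\cap Y)\cup(\bar F\cap\bar Y)$), and then compute, for any $F\in\FF$ and $F'=F\Delta Y\in\FF'$,
\[
F'\Delta X' \;=\; (F\Delta Y)\Delta(X\Delta Y) \;=\; F\Delta X,
\qquad
\bar{F'}\Delta X' \;=\; (\bar F\Delta Y)\Delta(X\Delta Y) \;=\; \bar F\Delta X.
\]
Hence $\{F'\Delta X'\}\cup\{\bar{F'}\Delta X'\}$ as $F'$ ranges over $\FF'$ is exactly $\{F\Delta X\}\cup\{\bar F\Delta X\}$ as $F$ ranges over $\FF$, so the unordered pseudohalfsphere-hypergraphs defined by $(\FF',X')$ and $(\FF,X)$ coincide.

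The only mildly delicate step is the case analysis, and the main obstacle is to avoid getting confused about which order applies where: one must remember that ``$x<'y<'z$'' forces all $Z$-vertices to precede all $Y$-vertices in the triple, and that translating back to $S$ reverses the relative position of $Z$ and $Y$, while the membership flip on $Y$ sometimes swaps the roles of $A$ and $B$. Once those two bookkeeping points are kept straight, each of the four cases reduces immediately to an ABA in $\FF$.
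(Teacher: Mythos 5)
Your proof is correct and rests on the same two observations as the paper's (membership is unchanged on $Z$ and flipped on $Y$, while the relative position of $Y$ and $Z$ reverses); the only difference is organizational: the paper reduces to $|Y|=1$ and inducts, moving one vertex at a time, whereas you handle all of $Y$ at once via the four-case $ZZZ/ZZY/ZYY/YYY$ analysis, and each of your cases checks out. Your treatment of the ``moreover'' part is also correct and in fact slightly more complete than the paper's, since you verify explicitly that $\overline{F\Delta Y}=\bar F\Delta Y$, which the paper leaves implicit.
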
 
\begin{proof}
It is enough to show the statement if $|Y|=1$, as then by induction we can proceed with the vertices of $|Y|>1$ one by one.
Let us denote the original order by $<$ and the new one by $\prec$.
It is enough to show that for any $A,B\in \FF$ we have no ABA-sequence in $A'=A\Delta Y, B'=B\Delta Y\in \FF'$ according to the order $\prec$. 
We will only use that there is no ABA-sequence in $A,B$ according to $<$.
Denote the only element of $Y$ by $y$.
If $y\notin A\Delta B$, then $A\Delta B$ is unchanged by the transformation, thus an ABA-sequence in $A',B'$ according to $\prec$ would also be an ABA-sequence in $A,B$ according to $<$, a contradiction.
Thus, without loss of generality, $y\in B\setminus A$ and so $y\in A'\setminus B'$.
An ABA-sequence in $A',B'$ according to $\prec$ not containing $y$ would be an ABA-sequence also in $A,B$ according to $<$.
Otherwise, if three vertices $a\prec b\prec y$ form an ABA-sequence in $A',B'$, then the three vertices $y<a<b$ form an ABA-sequence in $B,A$, a contradiction.

For the moreover part, notice that as $F\Delta Y \Delta X'= F\Delta Y \Delta X\Delta Y=F\Delta X$, the hyperedges of $\HH$ and $\HH'$ are indeed the same.
\end{proof}

\begin{remark} Lemma~\ref{lem:pushbackABA} suggests that instead of our linear ordering of the vertices, we could consider them in circular order.
Indeed, let the vertices be points in a circle, where for every vertex the point opposite to it on the circle is also a vertex, called its negated pair.
Now take a hypergraph on such a circular point set which contains exactly one point from each opposite pair and is {\em circular ABAB-free}, that is, it does not contain two sets, $A$ and $B$, and four points, $a,b,c,d$, that are in this order around the circle for which $a,c\in A\setminus B$ and $b,d\in B\setminus A$.
It is easy to see that such a hypergraph is also circular ABABAB-free, and restricting it to any consecutive subset of half of the vertices is an ABA-free hypergraph with the same (non-circular) order. 
For example if the original base set is $S=(a,b,c)$ in this order and a set in the family is $F=\{a, c\}$, then in the circular order the base set  is $(a,b,c,\bar a, \bar b, 
\bar c)$ and $\{a, c, \bar b\}$ is  $F$. After we apply Lemma~\ref{lem:pushbackABA} with $Y=\{a\}$, we essentially rotate the non-circular base set by one in the circular order and the ``new'' base set becomes $S'=\{b,c,\bar a\}$. In the circular order $F$ is still $\{a, c, \bar b\}=\{\bar b, c, \bar {\bar a}\}$ which is $\{c\}$ over $S'$ (as only $c$ is non-negated compared to $S'$).

Our earlier results could be translated to this abstraction as well, which models the above rotational symmetry of pseudohemispheres in a more natural way.
However, further statements we prove are still non-trivial even in this model, so we will stick with our original linear ordering of the vertices.
\end{remark}

\begin{lem}\label{lem:helly}[Helly's theorem for pseudohalfplanes]
If any three hyperedges of a pseudohalf\-plane-hypergraph intersect, then we can add a vertex contained in all pseudohalfplanes of the arrangement.
\end{lem}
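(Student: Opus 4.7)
Splitting $\HH=\UU\cup\DD$ into $\UU=\HH\cap\FF$ and $\DD=\HH\cap\bar\FF$ (both ABA-free), the goal reduces to inserting one new vertex $v$ into the vertex order and defining $v$'s membership in each $F\in\FF$ so that (i) the extended family $\FF'$ is ABA-free, (ii) $v\in F'$ for every upset $F\in\UU$, and (iii) $v\notin F'$ for every $F$ with $\bar F\in\DD$. By Definition~\ref{def:pseudo}, this is exactly what it means to add a vertex contained in all pseudohalfplanes of the arrangement.

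The first step is a single-type Helly lemma. For the ABA-free, pairwise intersecting family $\UU$, set $l_\UU=\max_{U\in\UU}\min(U)$ and $r_\UU=\min_{U\in\UU}\max(U)$. Pairwise intersection forces $l_\UU\le r_\UU$, because otherwise the two upsets realising the extrema would be entirely on opposite sides of each other in the order, hence vertex-disjoint. At any position $p$ strictly between $l_\UU$ and $r_\UU$ every $U\in\UU$ has elements both before and after $p$, so Proposition~\ref{prop:insert} produces an ABA-free extension in which $v$ lies in every upset. A symmetric statement yields a range $[l_\DD,r_\DD]$ for the downsets, where the required behaviour is now $v\notin\bar D\in\FF$ for all $D\in\DD$, i.e.\ $\bar D$ should lie entirely on one side of $p$.

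The second step combines the two types using the 3-wise intersection hypothesis. I would select $p$ and the membership of $v$ in the remaining $F\in\FF$ so that (i)--(iii) hold simultaneously and no new ABA-sequence involving $v$ is introduced. The crucial observation is that any obstruction to such a choice --- an empty combined range, or an inconsistent membership forced on $v$ by two different hyperedges --- can be converted into a triple of sets from $\HH$ that is pairwise intersecting but commonly disjoint, contradicting the hypothesis. The triples one needs to examine are of the two mixed shapes $\{U_1,U_2,D\}$ and $\{U,D_1,D_2\}$, obtained from the sets that realise the extrema $l_\UU,r_\UU,l_\DD,r_\DD$; Observation~\ref{obs:topindown} and the unskippable-vertex machinery from Section~\ref{sec:ABA} are the main tools for exhibiting such a violating triple from an ordering conflict.

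The main obstacle is this second step: pinning down exactly which combinatorial conditions on $p$ fail in each bad configuration, and converting each failure into a concrete triple that violates the 3-wise hypothesis. Once that case analysis is carried out, the extension $\FF'$ can be read off directly from Proposition~\ref{prop:insert} applied at the chosen $p$, and the lemma follows. The whole argument mirrors the classical Helly theorem for halfplanes, but is executed purely abstractly inside the ABA-free framework.
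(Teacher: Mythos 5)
There is a genuine gap: your argument is a plan whose decisive step is explicitly deferred. Step~1 (the single-type Helly statement for $\UU$ via $l_\UU=\max\min(U)$ and $r_\UU=\min\max(U)$) is fine, modulo the edge case $l_\UU=r_\UU$ where no insertion position strictly between them exists. But the lemma's entire content sits in your Step~2, which you describe only as ``once that case analysis is carried out.'' Note that Proposition~\ref{prop:insert} gives you no freedom: it puts $v$ into \emph{every} hyperedge of $\FF$ straddling the chosen position $p$, so you need a single $p$ that is straddled by every upset and by \emph{no} set $\bar D$ with $D\in\DD$ (equivalently, every downset must contain a full prefix or a full suffix relative to $p$). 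Showing that the nonexistence of such a $p$ always produces three hyperedges of $\HH$ with empty common intersection is a substantial case analysis involving both mixed triple types, and nothing in the proposal indicates how the $3$-wise hypothesis actually enters; as written, the proof cannot be checked or completed.

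For contrast, the paper sidesteps all of this by dualizing. Via Proposition~\ref{prop:dual2X} the dual hypergraph has hyperedges $\{F\Delta X\mid F\in\FF\}$, and ``add a vertex in every pseudohalfplane'' becomes ``add the hyperedge $\bar X$ to $\FF$ keeping it ABA-free,'' since $\bar X\Delta X=S$. The hypothesis dualizes to: any three vertices lie in a common hyperedge. Now any putative ABA-violation between $\bar X$ and some $F\in\FF$ is witnessed by just three vertices $x<y<z$; the hypothesis supplies $G\in\FF$ with $G\Delta X\supseteq\{x,y,z\}$, hence $G$ agrees with $\bar X$ on $\{x,y,z\}$, and $F,G$ already violate ABA-freeness of $\FF$. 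The three-wise hypothesis is consumed exactly at the three vertices of the violation, which is why the proof is three lines. If you want to salvage your primal approach, you would need to carry out the full obstruction-to-triple conversion; the dual route is the one that makes the hypothesis and the conclusion match up locally.
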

\begin{proof}
We prove the dual statement, as it will be more convenient.
That is, suppose that we are given a pseudohemisphere-hypergraph $\HH$, such that all its hyperedges are derived from $\FF$, i.e., $\HH$ has a representing ABA-free $\FF$ and vertex set $X\subset S$ such that for every $H\in\HH$ there is an $F\in \FF$ such that $H=F\Delta X$. 
We need to show that if for any three vertices there exists a hyperedge that contains all three of them, then we can add the hyperedge $\bar{X}$ to $\FF$ such that it stays ABA-free.
This is indeed the dual equivalent of the statement, as $\bar{X}\Delta X\in\HH$ contains all the vertices.

For a contradiction, suppose that $\bar X$ and some $F\in \FF$ violate ABA-freeness because of some vertices $x,y,z$.
By our assumption, there exists another hyperedge $G\Delta X$ which contains all of $x,y,z$, thus $G$ and $\bar X$ contain the same subset of $x,y,z$.
Thus $F,G\in \FF$ contain an ABA-sequence on the vertices $x,y,z$ as $F,\bar X$ contain an ABA-sequence on $x,y,z$, a contradiction. 
\end{proof}

Applying this to the complements of the pseudohalfplanes we get the following.

\begin{cor}\label{cor:hellycor}
Given a pseudohalfplane-hypergraph, either there are already three hyperedges that cover all the vertices, or we can add a vertex which is in none of the hyperedges. 
\end{cor}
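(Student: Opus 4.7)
The plan is to apply Lemma~\ref{lem:helly} to the complement arrangement $\bar\HH=\{\bar H \mid H\in\HH\}$ and take the contrapositive of the conclusion.

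First I would verify that $\bar\HH$ is again a pseudohalfplane-hypergraph. By Definition~\ref{def:pseudo}, there is an ABA-free family $\FF$ with $\HH\subset \FF\cup\bar\FF$. Since complementation just swaps $\FF$ with $\bar\FF$, we get $\bar\HH\subset \bar\FF\cup\FF=\FF\cup\bar\FF$, so the same $\FF$ represents $\bar\HH$.

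Next I would rewrite the conclusion we want as a statement about $\bar\HH$. Saying that we can add a vertex outside every hyperedge of $\HH$ is the same as saying we can add a vertex inside every hyperedge of $\bar\HH$. Saying that three hyperedges of $\HH$ cover all vertices of $S$ is, by De Morgan, the same as saying three hyperedges of $\bar\HH$ have empty common intersection. Thus the statement of Corollary~\ref{cor:hellycor} is exactly the contrapositive of Lemma~\ref{lem:helly} applied to $\bar\HH$: assuming no three hyperedges of $\HH$ cover all vertices, every three hyperedges of $\bar\HH$ intersect, and hence by Lemma~\ref{lem:helly} we can add a vertex lying in all of $\bar\HH$, i.e., a vertex avoiding all of $\HH$.

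Since Lemma~\ref{lem:helly} has already been established for arbitrary pseudohalfplane-hypergraphs, there is essentially no obstacle here; the only thing to check carefully is the stability of the class under complementation, which is immediate from the symmetric shape of Definition~\ref{def:pseudo}. In particular, no further abstract machinery is needed, and the proof can be written in just a few lines.
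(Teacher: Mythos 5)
Your proof is correct and is exactly the paper's argument: the paper derives Corollary~\ref{cor:hellycor} in one line by ``applying this [Lemma~\ref{lem:helly}] to the complements of the pseudohalfplanes,'' which is precisely your complementation-plus-contrapositive step. Your additional check that $\bar\HH$ is again a pseudohalfplane-hypergraph (immediate from the symmetry of Definition~\ref{def:pseudo}) is a detail the paper leaves implicit but is worth having spelled out.
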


Now we show that reordering the vertices in an appropriate way keeps the ordered hypergraph ABA-free.

\begin{lem}\label{lem:pushfront}
Suppose $\FF$ is an ordered ABA-free hypergraph on vertex set $S$.
Let $F\in \FF$ be a smallest hyperedge in the partial ordering of the hyperedges of $\FF$.
If we reorder $S$ as $(F,\bar F)$, i.e., the vertices of $F$ go to the front but otherwise the order inside $F$ and $\bar F$ is unchanged, then the ordered hypergraph remains ABA-free. 
\end{lem}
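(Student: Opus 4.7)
The plan is to extract a clean structural consequence from the minimality of $F$ and then case-analyze how a hypothetical ABA-violation in the new order $\prec$ must be distributed between $F$ and $\bar F$.

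The first step I would take is to observe that, because $F$ is minimal in the partial order on $\FF$, for every hyperedge $A\in\FF$, every element of $F\setminus A$ precedes every element of $A\setminus F$ in the original order $<$. Indeed, otherwise there would exist $a\in A\setminus F$ and $f\in F\setminus A$ with $a<f$, which by the definition of the partial order would force $A<F$ and contradict the minimality of $F$. (When $F$ and $A$ are $\subseteq$-comparable, one of the two differences is empty and the statement is vacuous.)

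Next, I would assume for contradiction that some $A,B\in\FF$ together with vertices $x\prec y\prec z$ form an ABA-sequence in $\prec$, i.e.\ $x,z\in A\setminus B$ and $y\in B\setminus A$. Because $\prec$ coincides with $<$ separately on $F$ and on $\bar F$, and places every element of $F$ strictly before every element of $\bar F$, the triple $(x,y,z)$ must fall into one of four configurations consistent with $\prec$: all three inside $F$, all three inside $\bar F$, $\{x,y\}\subset F$ with $z\in\bar F$, or $x\in F$ with $\{y,z\}\subset\bar F$. In the first two cases $\prec$ agrees with $<$ on $\{x,y,z\}$, so the same triple immediately violates ABA-freeness of $\FF$ in the original order, a contradiction.

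For the mixed case $\{x,y\}\subset F$ and $z\in\bar F$, I would apply the structural property to $A$: since $y\in F\setminus A$ and $z\in A\setminus F$, it gives $y<z$; combined with $x<y$ (automatic within $F$), we obtain an ABA-sequence $x<y<z$ for $A,B$ in $<$, contradicting ABA-freeness. For the remaining case $x\in F$ and $\{y,z\}\subset\bar F$, the structural property applied to $B$ gives $x<y$ from $x\in F\setminus B$ and $y\in B\setminus F$, and $y<z$ is automatic inside $\bar F$; again $x<y<z$ is an ABA-sequence in $<$, a contradiction.

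I expect the main conceptual step to be identifying the right consequence of minimality---that $F$ ``cannot be crossed from the right''---since after that observation the case analysis is short and mechanical, and no further use of the specific structure of $\FF$ is needed.
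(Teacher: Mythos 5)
Your proof is correct and follows essentially the same route as the paper's: assume an ABA-sequence in the new order, note that the vertices of $F$ form a prefix so only four configurations are possible, dispose of the two pure cases by order preservation, and use the minimality of $F$ to rule out the two mixed cases. The only cosmetic difference is that you first isolate the consequence of minimality (every element of $F\setminus A$ precedes every element of $A\setminus F$) as a standalone observation, whereas the paper derives the relation $A<F$ or $B<F$ inside each mixed case; the content is identical.
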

\begin{proof}
Let us denote the original order by $<$ and the new one by $\prec$.
Suppose on the contrary, that for some $A,B\in\FF$ we have some $a, c\in A\setminus B$ and $b\in B\setminus A$ that satisfy $a\prec b\prec c$.
The proof is a simple case analysis of how this could happen.
Notice that $c\in F$ implies $b\in F$ and $b\in F$ implies $a\in F$, so there are four cases.
If $a,b,c\in F$ or $a,b,c\notin F$, then $a<b<c$.
In this case $A$ and $B$ contradict that $\FF$ is ABA-free.
If $a\in F$ and $b,c\notin F$, then we must have $b<a$.
In this case $B<F$, contradicting that $F$ is smallest.
If $a,b\in F$ and $c\notin F$, then we must have $c<b$.
In this case $A<F$, contradicting that $F$ is smallest.
\end{proof}

\begin{remark} If $S=\{a<b<c\}$ and $\FF=\{\{a\}, \{c\}, \{a,b\}, \{a,c\}, \{b,c\}\}$, then in any reordering of $S$ where the elements of the hyperedge $\{a,c\}$ go to front (i.e., in $\{a<c<b\}$ and $\{c<a<b\}$) ABA-freeness is violated.
This shows that in the above Lemma~\ref{lem:pushfront} the assumption that $F$ is a smallest hyperedge cannot be removed.
We might hope that the lemma can be modified to remain true for all hyperedges by first applying Lemma~\ref{lem:pushback} for an appropriate prefix set of the points, however this is also not possible.
Consider the ABA-free hypergraph $\FF=\{\emptyset, \{a\}, \{c\}, \{a,b\}, \{a,c\}, \{b,c\}, \{a,b,c\}\}$ and define $\FF_X=\{F\Delta X\mid F\in \FF\}$ for any $X\subset S=\{a,b,c\}$.
In this case there is no $X$ for which there is a reordering of $S$ that starts with the elements of $\{a,c\}\Delta X$ and for which $\FF_X$ is ABA-free with this new order.
\end{remark}

\begin{lem}\label{lem:polar}
If all the hyperedges of a pseudohemisphere-hypergraph $\HH$ avoid some vertex $p$ in $S$, then $\hat\HH$, the dual hypergraph of $\HH$, is a pseudo\-half\-plane-hypergraph.
\end{lem}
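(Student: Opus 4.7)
The plan is to produce a representation of the dual hypergraph $\hat\HH$ in the form required by Definition~\ref{def:pseudo}, combining Proposition~\ref{prop:dual} (the dual of an ABA-free hypergraph is ABA-free) with Lemma~\ref{lem:pushback} (symmetric-differencing an ABA-free family with a prefix of its vertex set preserves ABA-freeness, with the ``moreover'' part preserving the underlying pseudohalfsphere-hypergraph).

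First, using the moreover part of Lemma~\ref{lem:pushback}, I reorder $S$ so that the avoided vertex $p$ becomes the last element; this may change the set $X$ in the representation of $\HH$, but preserves $\HH$ as an (unordered) pseudohalfsphere-hypergraph. Let $\FF$ and $X$ denote the updated representation and split $\FF$ into $\FF^p := \{F \in \FF : p \in F\}$ and $\FF^{\bar p} := \{F \in \FF : p \notin F\}$. Since every hyperedge of $\HH$ avoids $p$, each $F \in \FF$ contributes exactly one of $F \Delta X$ or $\bar F \Delta X$ to $\HH$, the choice being dictated by whether $p \in F$.

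The key observation is that with $p$ maximal, for any $A \in \FF^{\bar p}$ and $B \in \FF^p$ we cannot have $B < A$: this would give witnesses $b < a$ with $b \in B \setminus A$ and $a \in A \setminus B$, and since $a \neq p$ we would have $a < p$, so the triple $b < a < p$ with $b, p \in B \setminus A$ and $a \in A \setminus B$ would be an ABA-sequence, contradicting ABA-freeness of $\FF$. Therefore the partial order on $\FF$ can be extended to a total order placing all of $\FF^{\bar p}$ before all of $\FF^p$. By Proposition~\ref{prop:dual}, the dual $\FF^*$ is ABA-free on its vertex set $V = \{v_F : F \in \FF\}$ with respect to this total order, and the subset $Y := \{v_F : F \in \FF^p\}$ forms a suffix of $V$.

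Applying Lemma~\ref{lem:pushback} to the prefix $V \setminus Y$ and using that complementing all hyperedges preserves ABA-freeness, I obtain that $\FF'' := \{f_v \Delta Y : v \in S\}$ is ABA-free on the reordered vertex set $(Y, V \setminus Y)$. To finish, a direct case analysis shows that each hyperedge $\tilde f_v$ of $\hat\HH$ equals either $f_v \Delta Y$ or its complement $\overline{f_v \Delta Y}$, according to whether $v \in X$ or not; the empty hyperedge $\tilde f_p = \emptyset$ fits naturally as $f_p \Delta Y = \emptyset$ since $f_p = Y$. Therefore $\hat\HH \subset \FF'' \cup \overline{\FF''}$ with $\FF''$ ABA-free, which by Definition~\ref{def:pseudo} means $\hat\HH$ is a pseudohalfplane-hypergraph.

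The main obstacle is the order-theoretic step: arranging that the ``problem'' block $Y$ sits as a contiguous segment of the vertex set of $\FF^*$. This relies essentially on placing $p$ last in $S$, which is exactly what lets the avoidance hypothesis translate into a segregation of $\FF^p$ from $\FF^{\bar p}$ in the partial order, thereby making Lemma~\ref{lem:pushback} applicable. Once this is set up, the final verification that $\tilde f_v$ has the claimed form is a routine case analysis in the two sub-cases $p \in X$ and $p \notin X$ (which swap the roles of $\FF^p$ and $\FF^{\bar p}$ but leave the end conclusion unchanged).
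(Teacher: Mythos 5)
Your proof is correct, and its overall architecture matches the paper's --- normalize the position of $p$ with Lemma~\ref{lem:pushback}, pass to the dual via Proposition~\ref{prop:dual}, then apply Lemma~\ref{lem:pushback} once more to cancel the twist set --- but the middle step is genuinely different. The paper moves $p$ to the \emph{front}, dualizes with an arbitrary linear extension, identifies the dual twist set $\hat X$ with the hyperedge $F_p$ corresponding to $p$, notes that $F_p$ is a \emph{smallest} hyperedge (because $p$ is the first vertex), and then invokes Lemma~\ref{lem:pushfront} to bring the vertices of $F_p=\hat X$ to the front so that the final application of Lemma~\ref{lem:pushback} is legal. You instead put $p$ \emph{last}, prove directly that no $B\ni p$ can lie below any $A\not\ni p$ in the partial order of $\FF$ (your ABA argument with the triple $b<a<p$ is correct), and then choose the linear extension in Proposition~\ref{prop:dual} so that $Y=\{v_F : p\in F\}$ is already a contiguous suffix; this makes Lemma~\ref{lem:pushfront} unnecessary. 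The trade-off is that you must carry out the dual bookkeeping by hand --- verifying that each dual hyperedge $\tilde f_v$ equals $f_v\Delta Y$ or its complement depending on how $v$ and $p$ sit relative to $X$, including the consistency check $f_p=Y$, $\tilde f_p=\emptyset$ --- which the paper outsources to its general description of dual representations (Proposition~\ref{prop:dual2X}). Both computations check out, so your version is a valid, slightly more self-contained alternative that shortens the dependency chain by one lemma.
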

\begin{proof}
Start with a representation of $\HH$: an ABA-free hypergraph $\FF$ and a point set $X$ such that $\HH \subset \{F\Delta X, \bar F\Delta X \mid F\in \FF\}$.
Apply Lemma~\ref{lem:pushback} with $Y$ being the vertices before $p$, this way we get a representation of $\HH$ in which $p$ is the first point.
Take $\hat\HH$, the dual of $\HH$, with representation $\hat\FF$ and $\hat X$.
In $\hat\HH$, the set corresponding to $p$ is $H_p=F_p\Delta \hat X$ for some $F_p\in \hat \FF$, where we can choose the representation such that $F_p$ is the smallest set of $\hat \FF$ (because of the ordering used in Proposition~\ref{prop:dual}, as $p$ was the smallest point of $\FF$).
Now apply Lemma~\ref{lem:pushfront} to get another representation of $\hat \HH$ in which the points of $F_p$ are at the beginning in the order.
As $p$ was a point that was in none of the hyperedges of $\HH$, in the dual $H_p$ contains no points and so $F_p=H_p\Delta \hat X=\emptyset\Delta \hat X=\hat X$.
Now apply again Lemma~\ref{lem:pushback} to $\hat \FF$ with $Y=\hat X$.
We get a representation $(\hat{\cal F}',\hat X')$ of $\hat \HH$ in which $\hat{X'}=\hat X\Delta \hat X=\emptyset$, that is, $\hat \HH$ is a pseudohalfplane-hypergraph.
\end{proof}

Applying Lemma \ref{lem:polar} to the dual of a pseudohemisphere-hypergraph we get the following dual statement:

\begin{cor}\label{cor:polar}
If the empty set is (or can be added as) a hyperedge of a pseudohemisphere-hypergraph $\HH$, then $\HH$ is a pseudo\-half\-plane-hypergraph.
\end{cor}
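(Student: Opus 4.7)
The plan is to reduce this statement to Lemma~\ref{lem:polar} by dualizing. I will use two key ingredients already established: (i) the class of pseudohalfsphere-hypergraphs is closed under taking duals (noted right after Definition~\ref{def:signedpseudo}), and (ii) the dual operation $\mathcal H \mapsto \mathcal H^*$ of Definition~\ref{def:dual} is involutive up to the natural relabeling that swaps vertices with hyperedges while preserving incidences.

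First, I would observe that having the empty set as a hyperedge of $\HH$ corresponds, in the dual $\hat\HH$, to the existence of a vertex $p$ that lies in no hyperedge of $\hat\HH$: indeed, the hyperedge of $\hat\HH$ associated with a vertex $v$ of $\HH$ consists of those hyperedges of $\HH$ containing $v$, and by definition no vertex of $\HH$ lies in $\emptyset$. If only the weaker hypothesis that the empty set \emph{can be added} to $\HH$ (while preserving the pseudohalfsphere-hypergraph property) is available, then dually we may add an isolated vertex $p$ to $\hat\HH$, and this operation preserves the pseudohalfsphere-hypergraph property (one simply inserts $p$ anywhere in the order and leaves the representing ABA-free family $\hat\FF$ and the set $\hat X$ unchanged).

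Next, I would apply Lemma~\ref{lem:polar} to the pseudohalfsphere-hypergraph $\hat\HH$ using this isolated vertex $p$. The conclusion is that the dual of $\hat\HH$ is a pseudohalfplane-hypergraph. Since $(\hat\HH)^{\hat{}} = \HH$ by involutivity of the dual, we obtain exactly that $\HH$ itself is a pseudohalfplane-hypergraph.

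The only place where some care is needed is the bookkeeping in the first step: one must check that ``$\HH$ has the empty set as a hyperedge'' really does translate under Definition~\ref{def:dual} to ``$\hat\HH$ has a vertex in no hyperedge'' (and that $\hat\HH$ remains a pseudohalfsphere-hypergraph after this identification), and similarly that if we had to \emph{add} the empty set to $\HH$ in advance, this addition is compatible with a representation by some $(\hat\FF,\hat X)$ on the dual side. Neither point presents a genuine obstacle, since both adding an isolated vertex and adding the empty hyperedge are clearly compatible with the defining representation of Definition~\ref{def:signedpseudo}. Once this translation is made, the corollary is just Lemma~\ref{lem:polar} read through the dual.
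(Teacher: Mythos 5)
Your proof is correct and follows exactly the route the paper takes: the paper derives Corollary~\ref{cor:polar} in one line by applying Lemma~\ref{lem:polar} to the dual of $\HH$, and your argument simply fleshes out that same dualization (empty hyperedge of $\HH$ corresponds to an isolated vertex of $\hat\HH$, the dual class is closed, and the dual is involutive).
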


\begin{lem}\label{lem:helly2}[Helly's theorem for pseudohemispheres]
If any four hyperedges of a pseudohemi\-sphere-hypergraph intersect, then we can add a vertex contained in all pseudohemispheres of the arrangement.
\end{lem}
\begin{proof}
Let $\HH$ be defined by $\cal F$ and $X\subset S$.
We prove the following stronger statement.
If there is a pseudohemisphere $F_0\Delta X=H_0\in \HH$ that has a non-empty intersection with any three other pseudohemispheres, then we can add a vertex contained in all the pseudohemispheres of the arrangement. 
Let $X'=\bar F_0=S\setminus F_0$ and denote by $\HH'$ the pseudohemisphere-hypergraph defined on $S$ by $\cal F$ and  $X'$.
As $H_0'=F_0\Delta X'=F_0\Delta (S\setminus F_0)=S$ contains all the points, we can apply Corollary~\ref{cor:polar} to $\HH'$ and the complement of $H_0'$ to conclude that $\HH'$ is a pseudohalfplane-hypergraph.
It follows from our definitions that the hyperedges in $\HH$ and $\HH'$ are in bijection such that for every $H\in \HH$ there is an $G\in \HH'$ (and vice versa) such that $H=G\Delta X'\Delta X$.

Next, we prove that in $\HH'$ any three pseudohemispheres intersect.
Suppose that the original intersection point of these pseudohemispheres with $H_0$ in $\HH$ was some $p\in H_0 \cap H_1 \cap H_2 \cap H_3$, where $H_i=F_i\Delta X$ for $1\le i \le 3$.
This implies $p\in (F_0\setminus X) \subset (S\setminus X')$ or $p\in (X\setminus F_0) \subset X'$.
In the first case, $p\in F_i$ and $p\in F_i\Delta X'=H_i'$.
In the second case, $p\notin F_i$ and $p\in F_i\Delta X'=H_i'$.

Therefore, any three pseudohalfplanes of $\HH'$ intersect.
Using Lemma~\ref{lem:helly} for the pseudo\-half\-plane-hyper\-graph representation of $\HH'$, we can add a new point $q$ to all the hyperedges of $\HH'$.
Denote this new pseudohalfplane-hypergraph by $\HH^+$, and let $S^+=S\cup \{q\}$ and $X^+=X'\Delta X$ (note that $q\notin X^+$).
The hypergraph $\HH'^{+}=\{G^+\Delta X^+ \mid G^+\in \HH^+\}$ on the base set $S^+$ is also a pseudohemisphere-hypergraph.
Moreover, we claim that it is the same as $\{H\cup \{q\} \mid H\in \HH\}$, which proves the lemma.
Indeed, recall that each hyperedge $H\in \HH$ is in bijection with a hyperedge $G\in \HH'$ with $H=G\Delta X'\Delta X=G\Delta X^+$.
Thus, each hyperedge $H^+=G\cup  \{q\}\in \HH^+$ is in bijection with the corresponding $H\cup \{q\}=G\Delta X^+$.
This implies that $\HH'^+=\{G\cup \{q\}\Delta X^+: G\in \HH'\}$=\{$H\cup \{q\}:H\in \HH\}$.
\end{proof}

Applying this to the complements of the pseudohemispheres we get the following.

\begin{cor}\label{cor:hellycor2}
Given a pseudohemisphere-hypergraph, either there are four hyperedges that cover all the vertices, or we can add a vertex which is in none of the hyperedges. 
\end{cor}

Now we are ready to prove Theorem~\ref{thm:shitting}. 

\begin{proof}[Proof of Theorem~\ref{thm:shitting}]
First we prove that every containment-free dual pseudohalfplane-hypergraph $\cal H$ has a $3$-shallow hitting set. Consider the dual of $\cal H$, the pseudohalfplane-hypergraph $\hat {\cal H}$.

If in  $\hat {\cal H}$ there is a set of at most $3$ hyperedges covering every point, then in $\cal H$ the corresponding $3$ vertices form a $3$-shallow hitting set.
Otherwise, by Corollary~\ref{cor:hellycor} we could add a point to $\hat{\cal H}$ that is in none of the pseudohalfplanes.
In this case, by Lemma~\ref{lem:polar} the dual of $\hat {\cal H}$, which is actually $\cal H$ itself, is a pseudohalfplane-hypergraph (note that we do not include the empty hyperedge that would be the dual of the newly added point).
By Lemma~\ref{lem:shallow} it has a $2$-shallow hitting set, which is also a $3$-shallow hitting set.
This finishes the proof of the first statement of Theorem~\ref{thm:shitting}.

Now we can similarly prove that every containment-free pseudohemisphere-hypergraph has a $4$-shallow hitting set. Let $\cal H$ be this hypergraph and take again its dual, $\hat {\cal H}$. If there is a set of at most $4$ hyperedges covering every point in $\hat{\cal H}$, then in $\cal H$ the corresponding $4$ vertices form a $4$-shallow hitting set.
Otherwise, by Corollary~\ref{cor:hellycor2} we could again add a point to $\hat {\cal H}$ that is in none of the pseudohalfplanes. As before this and Lemma~\ref{lem:polar} imply that $\cal H$ is a pseudohalfplane-hypergraph and thus by Lemma~\ref{lem:shallow} it has a $2$-shallow hitting set.
\end{proof}

\begin{proof}[Proof of Theorem~\ref{thm:dualpshp} and of Corollary~\ref{cor:sphere}]
Being a dual pseudohalfplane hypergraph and being a pseudohemisphere hypergraph are hereditary properties.
Thus, Theorem~\ref{thm:shitting} implies that all the assumptions of Theorem~\ref{algo:sy} hold  with $c=3$ and $c=4$, respectively, to get the polychromatic colorings required.
\end{proof}

\section{ABAB-free hypergraphs 
and more}\label{sec:bless}

Definition~\ref{def:ABA} can be generalized in a straightforward way, similarly to Davenport-Schinzel sequences \cite{DS}, to more alternations.
Our goal in this section is to show that already one more alternation gives non-two-colorable hypergraphs.

\begin{defi}\label{def:ABAB}
A hypergraph $\mathcal H$ with an ordered vertex set is called {\em ABAB-free} if $H$ does not contain two hyperedges $A$ and $B$ for which there are four vertices $w<x<y<z$ such that $w,y\in A\setminus B$ and $x,z\in B\setminus A$.

A hypergraph with an unordered vertex set is ABAB-free if its vertices have an ordering with which the hypergraph is ABAB-free.
\end{defi}

We remark that similarly to Proposition~\ref{prop:abaequipseudoline}, an ABAB-free hypergraph corresponds to an arrangement of graphic curves that intersect at most twice.

\subsection{ABAB-free hypergraphs that are not two-colorable}
We show that there are ABAB-free hypergraphs that do not have a proper $2$-coloring.
We prove this by ordering the vertices of a non-$2$-colorable hypergraph $\HH_k$ in a tricky way to give an ABAB-free hypergraph.
First we define this hypergraph $\HH_k$ often used in counterexamples, e.g., \cite{PTT09}.

\begin{figure}[t]
    \centering
        \includegraphics[scale=0.7]{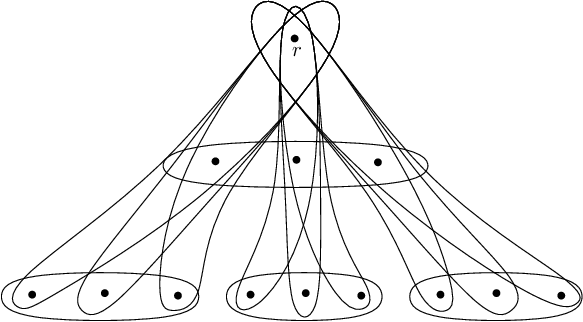}
   \caption{$\HH_3$}
   \label{fig1}
\end{figure}

\begin{figure}[t]
    \centering
        \includegraphics[scale=0.68]{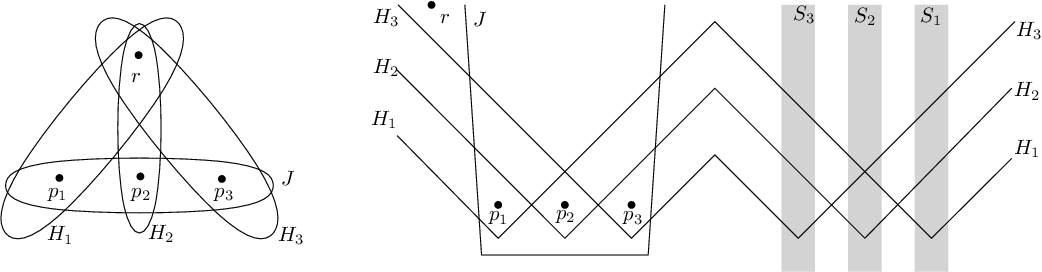}
   \caption{$\HH'_2$ and its realization with pseudoparabolas (for $k=3$)}
   \label{fig2}
\end{figure}

\begin{defi}
Let $G_k$ be the complete $k$-ary tree of depth $k$, i.e., the rooted tree such that its root $r$ has $k$ children, each vertex of $G_k$ in distance at most $k-2$ from $r$ has $k$ children and the vertices in distance $k-1$ from $r$ are the leafs (without children).

$\HH_k$ is the $k$-uniform hypergraph which has two types of hyperedges.
First, for every non-leaf vertex the set of its children form an hyperedge.
Second, the vertices of every descending path starting in $r$ and ending in a leaf form an hyperedge.
\end{defi}

It is easy to see that $\HH_k$ is not two-colorable. 
Now we show how to realize $\HH_k$ such that its vertices correspond to points in the plane and its hyperedges correspond to the points {\em above} pseudoparabolas (simple curves such that any two intersect at most {\em twice}). This implies that the $x$-coordinates define an ordering of the vertices of $\HH_k$ showing that $\HH_k$ is ABAB-free.
We fix $k$ and define $\HH'_l$ (resp.\ $G'_l$) to be the hypergraph (resp.\ graph) induced by $\HH_k$ (resp.\ $G_k$) and the subset of the vertices that are in distance at most $l-1$ from the root $r$ in $G_k$ ($\HH'_l$ is a simple hypergraph, i.e., if multiple hyperedges induce the same hyperedge, we take it only once). Thus in particular $G'_1$ has one vertex and no hyperedges while $\HH'_1$ has one vertex and one hyperedge containing it, while $\HH'_k=\HH_k$ and $G'_k=G_k$. Note that in $G'_l$ every non-leaf vertex has $k$ children, and $\HH'_l$ has hyperedges of size $l$ corresponding to descending paths (which we usually denote by $H_i$ for some $i$) and hyperedges of size $k$ corresponding to the set of children of some vertex (which we usually denote by $J_i$ for some $i$). See Figure~\ref{fig1}. 

In our realization, to simplify the presentation, points corresponding to vertices will be denoted with the same label, and similarly hyperedges and the corresponding pseudoparabolas will have the same label.

\begin{figure}[t]
    \centering
        \includegraphics[width=\textwidth]{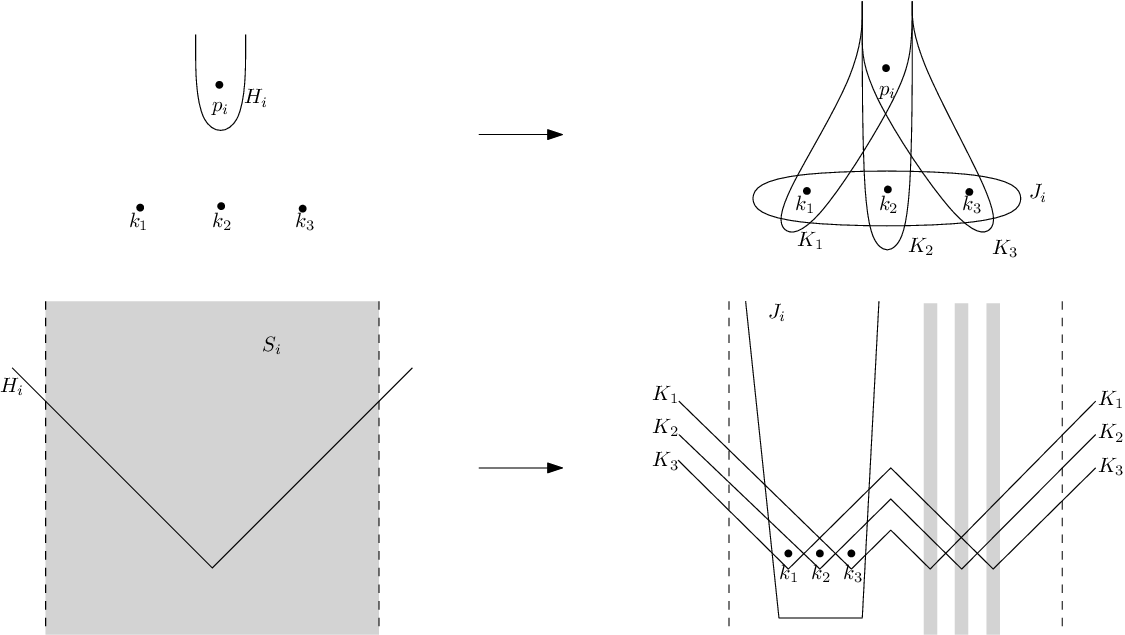}
   \caption{Recursive realization of $\HH'_l$: adding $k$ children to a leaf}
   \label{fig3}
\end{figure}

We will recursively realize $\HH'_l$, for an illustration see Figure~\ref{fig3}. We additionally maintain that each hyperedge (pseudoparabola) $H_i$ corresponding to a descending path has a vertical strip $S_i$ associated to it, such that inside $S_i$ there are no points and $H_i$ has the lowest boundary (thus no other hyperedge intersects $H_i$ inside $S_i$).
For $l=1$, this is trivial to do as $\HH'_1$ has one vertex and one hyperedge containing this vertex.
For $l=2$, Figure~\ref{fig2} shows a way to achieve this (for $k=3$).
Now suppose that for some $l$ we have $\HH'_l$ and we want to construct $\HH'_{l+1}$.
Take the construction of $\HH'_l$, and for each hyperedge $H_i$ corresponding to a descending path $P_i$ with endvertex $p_i$, do the following.
First make $k$ vertically translated copies of $H_i$ very close to each other.
Denote these by $K_1,K_2,\dots K_k$.
Next, using these $k$ copies of $H_i$, realize $\HH'_2$ (except the root $r$) in an appropriately small area inside $S_i$, by adding $k$ more points $k_1,k_2,\dots k_k$ such that for every $i$, $k_i$ is above $K_i$ and below every other pseudoparabola.
These points correspond to the children of $p_i$.
Finally, define the pseudoparabola $J_i$, which corresponds to the hyperedge containing all the $k_i$'s but no other vertex, as a parabola very close to the vertical strip containing the $k_i$'s.
For each $i$, the vertical strip that belongs to $K_i$ in the inner copy of $\HH'_2$ is the strip corresponding to the descending hyperedge that ends at $k_i$.
Therefore all properties are maintained, and by repeating the above procedure for each of the leafs $p_i$ of $\HH'_l$ we get a realization of $\HH'_{l+1}$.\qed

We are not aware of any nice characterization for the dual of ABAB-free hypergraphs, like we had for ABA-free hypergraphs in Proposition~\ref{prop:dual}.

\subsection{Bottomless rectangles and balanced colorings}

Every hypergraph given by a set of points and a collection of bottomless rectangles is ABAB-free, but not necessarily ABA-free.
In fact, it is not hard to see that such hypergraphs would correspond exactly to ``aBAb''-free hypergraphs, which can be defined similarly to Definition~\ref{def:ABA} as follows.

\begin{defi}\label{def:abab}
A hypergraph whose vertices are real numbers is {\em aBAb-free} if for any two of its hyperedges, $A$ and $B$, and vertices $x_1<x_2<x_3<x_4$ it does {\em not} hold that
$x_1\in A$, $x_2\in B\setminus A$, $x_3\in A\setminus B$, $x_4\in B$.
\end{defi}
 
It was shown in \cite{A13} that any finite set of points can be colored with $k$ colors such that any bottomless rectangle that contains at least $3k-2$ points contains a point of every color.
Unfortunately, we were not able to prove this using our methods, because containment-free bottomless rectangle families do not have a shallow hitting set, as shown by the following example.

\begin{figure}[t]
    \centering
        \includegraphics[scale=1]{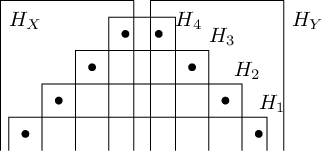}
   \caption{A containment-free bottomless rectangle family without a shallow hitting set}
   \label{fig:blessnotshallow}	
\end{figure}

\begin{ex}\label{ex:shallow}
Consider the set of points $X=\{(i,i)\mid i=1..k\}$ and $Y=\{(k+i,k+1-i)\mid i=1..k\}$ and the bottomless rectangle family that consists of the following.
\begin{enumerate}
\item A rectangle $H_X$ containing $X$.
\item A rectangle $H_Y$ containing $Y$.
\item Rectangles $H_i$ containing $(i,i)$ and $(2k+1-i,i)$ for $i=1..k$.
\end{enumerate}
Any hitting set for the $H_i$ rectangles contains $k/2$ points from $X$ or $Y$, thus it is not $(k/2-1)$-shallow for $H_X$ or $H_Y$ (for an illustration for $k=4$ see Figure~\ref{fig:blessnotshallow}).
\end{ex}

Instead of shallow hitting sets, we can ask whether a $k$-coloring exists for any contain\-ment-free bottomless rectangle family that satisfies a certain nice property, that can be achieved by repeatedly finding $c$-shallow hitting sets and making each of them a separate color class.
In the proofs in earlier sections, after $k$ shallow hitting sets were found and colored to different colors, we did not care about the remaining points, they were colored arbitrarily.
Instead, we could find a $(k+1)$-st shallow hitting set for the remaining points and use the first color for them,
then the second color for the $(k+2)$-nd shallow hitting set, and so on, until there are no more points left.
In general in the $i$-th step the shallow hitting set is colored with color $i$ (mod $k$), where color $0$ and color $k$ denote the same color.
This way we achieve a coloring that is not just polychromatic, but also has the following {\em balanced} property.

\begin{defi}\label{def:balanced}
We say that a $k$-coloring is {\em $c$-balanced} if for any given set (hyperedge) of our family denoting the sizes of any two color classes in it by $n_1$ and $n_2$, then we have $n_1\le c(n_2+1)$.
\end{defi}

As we have seen above, if a family has a $c$-shallow hitting set, then it also has a $c$-balanced $k$-coloring for any $k$.
For uniform families, a converse also holds; if every set has size $n$, then any color class of a $c$-balanced $n/c$-coloring is a $c^2$-shallow hitting set.
For non-uniform families, however, these notions might differ, so it is natural to ask the following.

\begin{prob}\label{prob:balanced}
Is there a balanced coloring for any family of bottomless rectangles?
\end{prob}

Example~\ref{ex:shallow} generalizes easily to other families, such as the translates or homothets of a convex polygon, so there is not much hope to achieve shallow hitting sets for other interesting planar families.
We do not, however, know whether a balanced coloring exists for the above families.

\subsection*{Acknowledgement}
We would like to thank our anonymous referees 
for their several useful suggestions and comments.

\appendix

\section{Simple facts about pseudolines}\label{app:facts}
Here we list some well-known facts about pseudoline arrangements.

A curve is {\em graphic} if it is the graph of a function, i.e., an $x$-monotone infinite curve that intersects every vertical line of the plane. A {\em graphic} pseudoline arrangement is such that every curve is graphic.
We say that two pseudoline arrangements are {\em equivalent} if there is a bijection between their pseudolines such that the order in which a pseudoline intersects the other pseudolines remains the same.
A {\em pseudohalfplane arrangement} is a pseudoline arrangement, with a side of each pseudoline selected. 

\medskip
\noindent
{\bf Facts about pseudoline arrangements}
\renewcommand{\theenumi}{\Roman{enumi}}%
\begin{enumerate}
\item (Levi Enlargement Lemma) Given a pseudoline arrangement, any two points of the plane can be connected by a new pseudoline (if they are not connected already).
\item Given a pseudoline arrangement, we can find a pseudoline arrangement in which every pair of pseudolines intersects exactly once, and the order in which a pseudoline intersects the other pseudolines remains the same (ignoring the new intersections).
\item Given a pseudoline arrangement, we can find an equivalent graphic pseudoline arrangement.
\end{enumerate}

From these facts it follows that in the definition of a pseudohalfplane we can (and will) suppose that the underlying pseudoline arrangement is a graphic pseudoline arrangement.

Notice that ABA-free hypergraphs are in a natural bijection with (graphic) pseudoline arrangements and sets of points, such that each hyperedge corresponds to the subset of points {\em above} a pseudoline.

\begin{prop}\label{prop:abaequipseudoline}
Given in the plane a set of points $S$ (with all different $x$-coordinates) and a graphic pseudoline arrangement $L$, define the hypergraph ${\cal H}_{S,L}$ with vertex set $S$ such that for each pseudoline $l\in L$ the set of points above $l$ is a hyperedge of ${\cal H}_{S,L}$. Then ${\cal H}_{S,L}$ is ABA-free with the order on the vertices defined by the $x$-coordinates.

Conversely, given an ABA-free hypergraph $\cal H$, there exists a set of points $S$ and a graphic pseudoline arrangement $L$ such that ${\cal H}={\cal H}_{S,L}$.
\end{prop}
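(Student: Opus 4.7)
}

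I would prove the two directions separately. For the forward direction, fix a graphic pseudoline arrangement $L$: each pseudoline $\ell \in L$ is the graph of a continuous function $f_\ell:\mathbb{R}\to\mathbb{R}$, and two such functions share an argument with equal value at most once. Suppose for contradiction that ${\cal H}_{S,L}$ contained an ABA-pattern, i.e.\ pseudolines $\ell_A,\ell_B$ and three points $p_{i_1},p_{i_2},p_{i_3}$ with $x$-coordinates $x_1<x_2<x_3$ and $y$-coordinates $y_1,y_2,y_3$, such that $p_{i_1},p_{i_3}$ lie above $\ell_A$ but not above $\ell_B$ while $p_{i_2}$ lies above $\ell_B$ but not above $\ell_A$. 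Translating ``above'' and ``not above'' into inequalities yields $f_A(x_1)<y_1<f_B(x_1)$, $f_B(x_2)<y_2<f_A(x_2)$, and $f_A(x_3)<y_3<f_B(x_3)$, so $f_A-f_B$ is negative, positive, negative at $x_1<x_2<x_3$; by continuity it must have at least two zeros, contradicting the fact that $\ell_A,\ell_B$ intersect at most once.

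For the converse direction, let $\mathcal H$ be an ABA-free hypergraph with ordered vertex set $v_1<\dots<v_n$. Place the points at $p_i=(i,0)$. I would construct the pseudolines $\{\ell_A:A\in\mathcal H\}$ by induction on $|\mathcal H|$, processing the hyperedges in the order of a total extension $<^*$ of the partial order $<$ on $\mathcal H$ defined in Section~\ref{sec:ABA}. When the first $k$ hyperedges have been realized as a graphic pseudoline arrangement on $S$, let $A$ be the $(k{+}1)$st hyperedge in $<^*$, so that every previously placed hyperedge $B$ satisfies $B<A$ in the partial order or is incomparable to $A$. The goal is to add a graphic curve $\ell_A$ that passes below $p_i$ iff $v_i\in A$ and crosses each existing $\ell_B$ at most once.

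The key combinatorial observation, which drives the construction, is that the ABA-free relation between $A$ and any $B$ forces the required pairwise order of $\ell_A$ and $\ell_B$ to change at most once as $x$ grows: at $x=i$ with $v_i\in A\setminus B$ we need $\ell_A$ below $\ell_B$, at $x=i$ with $v_i\in B\setminus A$ we need $\ell_B$ below $\ell_A$, and ABA-freeness says these two types of witnesses cannot interleave. Moreover if $B<A$ in the partial order then the witnesses from $B\setminus A$ precede those from $A\setminus B$, while if $B$ and $A$ are incomparable then only one type of witness occurs and no crossing is needed. Thus $\ell_A$ can be drawn above the entire existing arrangement on the far left (because every $B<^* A$ must lie below $\ell_A$ there) and then routed so that for each $B$ with $B<A$ there is exactly one crossing, placed between the last $B\setminus A$ vertex and the first $A\setminus B$ vertex, while $\ell_A$ remains below $p_i$ exactly for $v_i\in A$.

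The main obstacle is to turn this combinatorial specification into an actual curve in the plane that, together with the existing pseudolines, forms a valid pseudoline arrangement. For this I would appeal to Levi's enlargement lemma (Fact~I in Appendix~\ref{app:facts}): between consecutive $x$-coordinates, I choose waypoints for $\ell_A$ either just below $p_i$ (if $v_i\in A$) or just above $p_i$ (if $v_i\notin A$) and connect them by applying Levi's lemma repeatedly, shifting the waypoints vertically if necessary so that the unique forced crossing with each $\ell_B$ falls in the correct $x$-interval and no spurious double crossings are created. Finally, by Fact~III, the resulting infinite pseudoline arrangement is equivalent to a graphic one, completing the construction.
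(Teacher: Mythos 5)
Your forward direction is correct and is essentially the paper's argument: an ABA-pattern forces the sign of $f_A-f_B$ to alternate at three abscissas, hence two crossings. The converse is where the proposal diverges from the paper and where it has a genuine gap. The paper does not build the curves one at a time; it draws \emph{all} curves $\ell_A$ simultaneously (any graphic curve below exactly the points of $A$ will do), takes a drawing minimizing the total number of crossings, and shows that any remaining lens between $\ell_A$ and $\ell_B$ could either be eliminated (contradicting minimality) or forces three points $a<b<c$ witnessing an ABA-pattern (contradicting the hypothesis). That extremal argument sidesteps exactly the step your plan leaves open.

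The gap is the realization step of your induction. Your combinatorial analysis is right: ABA-freeness implies that for each already-drawn $\ell_B$ the forced above/below relation with the new $\ell_A$ reverses at most once, so \emph{per pair} one crossing suffices. But this does not yet produce a single curve satisfying all the gate constraints (below $p_i$ iff $v_i\in A$) while crossing every existing curve at most once, and Levi's enlargement lemma cannot supply it: Levi's lemma extends an arrangement by a pseudoline through \emph{two} prescribed points, and the analogous statement for three or more prescribed points is known to be false for pseudoline arrangements. Applying it to consecutive pairs of waypoints gives a different pseudoline for each pair, and concatenating arcs of these can create several crossings with a single $\ell_B$; the clause ``shifting the waypoints vertically if necessary so that \dots no spurious double crossings are created'' is precisely the assertion that needs proof. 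To close the gap you would either have to give a direct construction showing the pairwise constraints assemble into a consistent wiring diagram, or fall back on the paper's minimize-crossings-and-kill-lenses argument. (A small additional slip: for an already-placed $B\subsetneq A$ the only forced relation is $\ell_A$ below $\ell_B$, so starting $\ell_A$ above the \emph{entire} existing arrangement on the far left then requires a crossing with $\ell_B$ even though you assert none is needed; this is harmless for the crossing count but inconsistent with your description.)
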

\begin{proof}
\begin{figure}
    \centering
        \includegraphics[width=\textwidth]{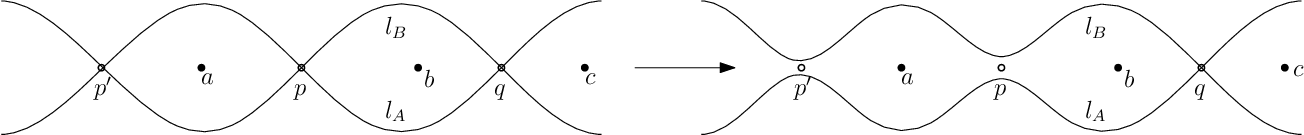}
   \caption{Redrawing a lens to decrease the number of intersections}
   \label{fig:lens}	
\end{figure}

The first part is almost trivial, suppose that there are two hyperedges $A,B$ in ${\cal H}_{S,L}$ having an ABA-sequence on the vertices corresponding to the points $a,b,c\in S$. The pseudolines corresponding to the hyperedges $A$ and $B$ are denoted by $\ell_A$ and $\ell_B$. The pseudoline $\ell_A$ intersects the vertical line through $a$ below $a$, the vertical line through $b$ above $b$ and the vertical line through $c$ above $c$, while $\ell_A$ intersects these in the opposite way (above/below/above). Thus these lines must intersect in the vertical strip between $a$ and $b$ and also in the strip between $b$ and $c$, thus having two intersections, a contradiction.

The second part of the proof is also quite natural. Given an ABA-free hypergraph ${\cal H}(V,E)$ with an ordering on $V$, we want to realize it with a planar point set $S$ and a graphic pseudoline arrangement $L$. 
Let $S$ be $|V|$ points on the $x$ axis corresponding to the vertices in $V$ such that the order on $V$ is the same as the order given by the $x$-coordinates on $S$. From now on we identify the vertices of $V$with the corresponding points of $V$.

For a given $A\in \cal H$ it is easy to draw an $\ell_A$ graphic curve for which the points of $S$ above $\ell_A$ are exactly in $A$. Draw a pseudoline $\ell_A$ for every $A\in \cal H$, such that there are finitely many intersections among these pseudolines, all of them crossings. What we get is an arrangement of graphic curves, but it can happen that they intersect more than twice. Now among such drawings take one which has the minimal number of intersections, we claim that this is a pseudoline arrangement. 

Assume on the contrary, that there are two curves $\ell_A$ and $\ell_B$ intersecting (at least) twice. Let two consecutive (in the $x$-order) intersection points be $p$ and $q$, where $p$ has smaller $x$-coordinate than $q$. Without loss of generality, $\ell_A$ is above $\ell_B$ close to the left of $p$ and close to the right of $q$, while $\ell_A$ is below $\ell_B$ in the open vertical strip between $p$ and $q$. This structure is usually called a lens, and we want to eliminate it in a standard way, decreasing the number of intersections. We can change the part of $\ell_A$ and $\ell_B$ to the left of $p$ (and to the right from the intersection $p'$ next to and left of $p$ if there is any) and change their drawing locally around $p$ (and $p'$ if it exists) such that we get rid of the intersection at $p$, see Figure~\ref{fig:lens}. If there are no points of $S$ between $\ell_A$ and $\ell_B$ and to the left of $p$ (and to the right of $p'$), then this redrawing does not change the hyperedges defined by $\ell_A$ and $\ell_B$, so we get a representation of $\cal H$ with less intersections, a contradiction. Thus there is a point $(p'<)a<p$ below $\ell_A$ and above $\ell_B$. Similarly, there must be a point $p<b<q$ above $\ell_A$ and below $\ell_B$ and finally a point $q<c$ below $\ell_A$ and above $\ell_B$, otherwise we could redraw the pseudolines with less intersections. These three points $a<b<c$ contradict the ABA-freeness of $\cal H$ as by the definition of the pseudolines, $b\in A\setminus B$ and $a,c\in B\setminus A$.
\end{proof}


\section{Small epsilon-nets for pseudohalfplanes}\label{app:epsnet}
Here we briefly mention the consequences of our results to $\epsilon$-nets of hypergraphs defined by pseudohalfplanes.
We omit proofs as they are not hard and can be obtained exactly as the corresponding results in \cite{SY12}.

Let ${\cal H} = (V, E)$ be a hypergraph where $V$ is a finite set. Let $\epsilon \in (0,1]$ be a real number.
A subset $N \subseteq V$ is called an $\epsilon$-net if for every hyperedge $S\in E$ such that $|S| \ge \epsilon |V |$, we also have $S \cap N \ne \emptyset$, i.e., $N$ is a hitting set for all ``large'' hyperedges.
It is known that hypergraphs with VC-dimension $d$ have small $\epsilon$-nets (of size $O(d/\epsilon\log(1/\epsilon))$ \cite{HW} and in general this is best possible \cite{Komlos}.
However, for geometric hypergraphs this is usually not optimal, in particular for halfplanes the following is true.
Consider a hypergraph ${\cal H} = (P,E)$ where $P$ is a finite set of points in the plane and $E=\{P\cap H \mid H \textit{ is a halfplane}\}$.
For this hypergraph there is an $\epsilon$-net of size $2/\epsilon-1$ for every $\epsilon$ \cite{Woeginger,SY12}.
Theorem~\ref{thm:primalpshp} implies that the same bound holds if the hypergraph is defined by pseudohalfplanes instead of halfplanes.
Also, for the dual hypergraph $\bar \HH$,
Theorem~\ref{thm:dualpshp} implies that there exists an $\epsilon$-net of size $3/ \epsilon$. Note that our results are in fact stronger as in the appropriate polychromatic coloring each color class intersects all large enough hyperedges, thus we get a partition of the vertices into $\epsilon$-nets (and at least one of them is a small $\epsilon$-net by the pigeonhole principle).

\end{document}